\newcommand*\patchAmsMathEnvironmentForLineno[1]{%
  \expandafter\let\csname old#1\expandafter\endcsname\csname #1\endcsname
  \expandafter\let\csname oldend#1\expandafter\endcsname\csname end#1\endcsname
  \renewenvironment{#1}%
     {\linenomath\csname old#1\endcsname}%
     {\csname oldend#1\endcsname\endlinenomath}}%
\newcommand*\patchBothAmsMathEnvironmentsForLineno[1]{%
  \patchAmsMathEnvironmentForLineno{#1}%
  \patchAmsMathEnvironmentForLineno{#1*}}%
\def\@seccntformat#1{\hspace*{4mm}%
  \protect\textup{\protect\@secnumfont
    \ifnum\pdfstrcmp{subsection}{#1}=0 \bfseries\fi
    \csname the#1\endcsname
    \protect\@secnumpunct
  }%
}
\def\section{\@startsection{section}{1}%
\z@{.7\linespacing\@plus\linespacing}{.5\linespacing}%
{\normalsize\scshape\bfseries\centering}}
\renewcommand{\@secnumfont}{\bfseries}
\def\A{\boldsymbol{A}}
\def\N{\mathbb{N}}
\def\R{\mathbb{R}}
\def\T{\mathbb{T}}
\def\MM{\mathcal{M}}
\def\OO{\mathcal{O}}
\def\QQ{\mathcal{Q}}
\def\TT{\mathcal{T}}
\def\UU{\mathcal{U}}
\def\XX{\mathcal{X}}
\def\f{\boldsymbol{f}}
\def\g{\boldsymbol{g}}
\def\k{{\underline{k}}}
\def\m{{\underline{m}}}
\def\n{{\underline{n}}}
\def\coarse{H}
\def\fine{h}
\newcommand{\norm}[3][]{#1\| #2 \|_{#3}}
\newcommand{\enorm}[2][]{#1|\!#1|\!#1| #2 #1|\!#1|\!#1|}
\newcommand{\set}[3][\big]{#1\{#2 \,:\, #3 #1\}}
\newcommand{\jump}[1]{[\![#1]\!]}
\newcommand{\normalvec}{\boldsymbol{n}}
\def\d#1{\,{\rm d}#1}
\def\div{{\rm div}\,}
\def\refine{{\tt refine}}
\def\work{{\tt work}}
\def\reff#1#2{\stackrel{\eqref{#1}}{#2}}
\def\refp#1#2{\stackrel{\phantom{\eqref{#1}}}{#2}}
\renewcommand{\vec}[1]{\boldsymbol{#1}}
\def\qctr{q_{\rm ctr}}
\def\Cstab{C_{\rm stab}}
\def\Crel{C_{\rm rel}}
\def\Cdrel{C_{\rm drel}}
\def\qred{q_{\rm red}}
\def\Clin{C_{\rm lin}}
\def\qlin{q_{\rm lin}}
\def\Cmark{C_{\rm mark}}
\def\Copt{C_{\rm opt}}
\def\lctr{\lambda_{\rm ctr}}
\def\qaux{q_{\rm aux}}
\def\Caux{C_{\rm aux}}
\def\lconv{\lambda_\star}
\def\Cgoal{C_{\rm goal}}
\def\Ccls{C_{\rm cls}}
\def\Cell{C_{\rm ell}}
\def\Ccnt{C_{\rm cnt}}
\def\Crate{C_{\rm rate}}
\newcounter{statement}
\newenvironment{statement}[2][!]{%
\vskip3mm
\hrule
\hrule
\hrule
\vskip1mm
\noindent%
\refstepcounter{statement}%
\bf#2~\thestatement%
\ifthenelse{\equal{#1}{!}}{.\ }{~(#1).\ }%
\it%
}{%
\vskip1mm
\hrule
\hrule
\hrule
\vskip2mm
}
\newenvironment{theorem}[1][!]{\begin{statement}[#1]{Theorem}}{\end{statement}}
\newenvironment{lemma}[1][!]{\begin{statement}[#1]{Lemma}}{\end{statement}}
\newenvironment{corollary}[1][!]{\begin{statement}[#1]{Corollary}}{\end{statement}}
\newenvironment{remark}[1][!]{\begin{statement}[#1]{Remark}}{\end{statement}}
\newenvironment{algorithm}[1][!]{\begin{statement}[#1]{Algorithm}}{\end{statement}}
\title{Goal-oriented adaptive finite element methods with optimal computational complexity}
\author{Roland Becker}
\author{Gregor Gantner}
\author{Michael Innerberger}
\author{Dirk Praetorius}
\address{Université de Pau et des Pays de l’Adour, IPRA-LMAP, Avenue de l’Université BP 1155, 64013 PAU Cedex, France}
\email{roland.becker@univ-pau.fr}
\address{Korteweg-de Vries (KdV) Institute for Mathematics, University of Amsterdam, P.O. Box 94248, 1090 GE Amsterdam, The Netherlands.}
\email{g.gantner@uva.nl}
\address{TU Wien, Institute for Analysis and Scientific Computing, Wiedner Hauptstr. 8-10/E101/4, 1040 Vienna, Austria}
\email{michael.innerberger@asc.tuwien.ac.at \quad \rm (corresponding author)}
\address{TU Wien, Institute for Analysis and Scientific Computing, Wiedner Hauptstr. 8-10/E101/4, 1040 Vienna, Austria}
\email{dirk.praetorius@asc.tuwien.ac.at}
\keywords{Adaptivity, Goal-Oriented Algorithm, Finite Element Method, Convergence of
	Adaptive FEM, Optimal Convergence Rates, Optimal Computational Cost}
\subjclass[2010]{65N30, 65N50, 65Y20, 41A25, 65N22}
\thanks{{\bf Acknowledgement.} The authors thankfully acknowledge support by the Austrian Science Fund (FWF) through the doctoral school \emph{Dissipation and dispersion in nonlinear PDEs} (grant W1245), the SFB \emph{Taming complexity in partial differential systems} (grant SFB F65), the stand-alone project \emph{Computational nonlinear PDEs} (grant P33216), and the Erwin Schrödinger Fellowship \emph{Optimal adaptivity for space-time methods} (grant J4379).}
\begin{document}

\begin{abstract}
We consider a linear symmetric and elliptic PDE and a linear goal functional. We design and analyze a goal-oriented adaptive finite element method, which steers the adaptive mesh-refinement as well as the approximate solution of the arising linear systems by means of a contractive iterative solver like the optimally preconditioned conjugate gradient method or geometric multigrid. We prove linear convergence of the proposed adaptive algorithm with optimal algebraic rates. Unlike prior work, we do not only consider rates with respect to the number of degrees of freedom but even prove optimal complexity, i.e., optimal convergence rates with respect to the total computational cost.
\color{black}
\end{abstract}

\maketitle
\thispagestyle{fancy}


\section{Introduction}

Let $\Omega \subset \R^d$ be a bounded Lipschitz domain, $d \ge 2$.
For given $f \in L^2(\Omega)$ and $\f \in [L^2(\Omega)]^d$, we consider a linear symmetric and elliptic partial differential equation
\begin{align}\label{eq:strongform}
\begin{split}
	-\div \A \nabla u^\star + cu^\star &= f + \div \f\hphantom{0} \quad \text{in } \Omega,\\
	u^\star &= 0 \hphantom{f + \div \f} \quad \text{on } \Gamma := \partial\Omega,
\end{split}
\end{align}
where $\A(x) \in \R^{d \times d}_{\mathrm{sym}}$ is symmetric and $c(x) \in \R$.
As usual, we assume that $\A, c \in L^\infty(\Omega)$, that $\A$ is uniformly positive definite and that the weak form (see~\eqref{eq:primal:formulation} below) fits into the setting of the Lax--Milgram lemma.
Standard adaptivity aims to approximate the unknown solution $u^\star \in H^1_0(\Omega)$ of~\eqref{eq:strongform} in the energy norm at optimal rate; see~\cite{doerfler1996,mns2000,bdd2004,stevenson2007,ckns2008,cn2012,ffp2014} for adaptive finite element methods (AFEMs) and~\cite{axioms} for an overview of available results. Instead, the quantity of interest for goal-oriented adaptivity is only some functional value of the unknown solution $u^\star \in H^1_0(\Omega)$ of~\eqref{eq:strongform}, and the present paper aims to compute the linear goal functional
\begin{align}
\label{eq:goal-functional}
	G(u^\star) := \int_\Omega \big( gu^\star - \g \cdot \nabla u^\star \big) \d{x},
\end{align}
for given $g \in L^2(\Omega)$ and $\g \in [L^2(\Omega)]^d$.
To approximate $G(u^\star)$ accurately, it is not necessary (and might even waste computational time) to accurately approximate the solution $u^\star$ on the whole computational domain.
Due to this potential decrease of computational cost, goal-oriented adaptivity is of high relevance in practice as well as in mathematical research; see, e.g., \cite{br01, br03, eehj95, gs2002} for some prominent contributions.

The present work formulates a goal-oriented adaptive finite element method (GOAFEM), where the sought goal $G(u^\star)$ is approximated by some computable $G_\ell$  such that
\begin{align}
	| G(u^\star) - G_\ell | \xrightarrow{\ell \to \infty} 0
	\quad \text{even at optimal algebraic rate}.
\end{align}
The earlier works~\cite{ms2009,bet2011,pointabem,fpz2014} are essentially concerned with optimal convergence rates for GOAFEM, where all arising linear FEM systems are solved exactly. While~\cite{pointabem,fpz2014} particularly aim to transfer ideas from the AFEM analysis of~\cite{ckns2008,axioms} to GOAFEM for general elliptic PDEs, the seminal work~\cite{ms2009} considers the Poisson model problem and additionally addresses the total computational cost by formulating realistic assumptions on a generic inexact solver (called GALSOLVE in~\cite{stevenson2007,ms2009}).

The focus of the present work is also on the iterative (and hence inexact) solution of the arising FEM systems. However, we avoid any \emph{realistic assumptions} on the solver, but rather rely on energy contraction per solver step, which is proved to hold for the preconditioned CG method with optimal multilevel additive Schwarz preconditioner~\cite{cnx12} or the geometric multigrid method~\cite{wz17}. In the proposed GOAFEM algorithm, the termination of such a contractive iterative solver is then based on appropriate computable {\sl a posteriori} error estimates by a similar criterion as in~\cite{stevenson2007,ms2009}. We discuss several implementations of such termination criteria and prove that these allow to control the total computational cost of computing the approximate goal value $G_\ell$, where we already stress now that $G(u^\star) \approx G_\ell = G(u_\ell) + R_\ell$, where $u_\ell \approx u^\star$ is a FEM approximation of $u^\star$ and $R_\ell$ is a residual correction related to inexact solution of the FEM formulation.
While~\cite{ms2009} shows algebraic convergence with optimal rates with respect to the overall computational cost for the final iterates on every level for sufficiently small adaptivity parameters (for mesh-refinement and solver termination), our main contribution is \emph{full linear convergence}, i.e., linear convergence of the estimator product independently of the algorithmic decision for either mesh-refinement or solver step, even for arbitrary adaptivity parameters.
An immediate consequence is that the convergence rate of the computed solutions with respect to the number of elements will be the same as with respect to the overall computational cost (i.e., the cumulative computational time).
Moreover, for sufficiently small adaptivity parameters, we show convergence with optimal rates with respect to the number of elements and, hence, with respect to the overall computational cost.
This extends the results of~\cite{ms2009} to the present setting of symmetric second-order linear elliptic PDEs.
Finally, we stress that, unlike~\cite{ms2009}, our GOAFEM algorithm does not require any inner loop for data approximation and therefore does not require different (but still nested) meshes for the primal and dual problem.
Overall, the present paper thus provides further mathematical understanding for bridging the gap between applied GOAFEM and theoretical optimality results.

\textbf{Outline.}
In Section~\ref{sec:fem}, we present our GOAFEM algorithm (Algorithm~\ref{algorithm}) and the details of its individual steps.
This includes the details of our finite element discretization as well as the precise assumptions for the iterative solver, the marking strategy, and the error estimators.
We then state in Section~\ref{sec:results} that Algorithm~\ref{algorithm} leads to linear convergence for arbitrary stopping parameters (Theorem~\ref{theorem:linear-convergence}) and even achieves optimal rates with respect to the total computational cost if the adaptivity parameters are sufficiently small (Theorem~\ref{theorem:optimal-convergence}).
We emphasize that linear convergence applies to all steps of the adaptive strategy, independently of whether the algorithm decides for one solver step or one step of local mesh-refinement.
This turns out to be the key argument for optimal rates with respect to the total computational cost (see Corollary~\ref{corollary:optimal-convergence}).
Section~\ref{section:termination} comments on alternative termination criteria for the iterative solver.
Section~\ref{sec:numerics} then illustrates our theoretical findings with numerical experiments.
Finally, we give a proof of our main Theorems~\ref{theorem:linear-convergence} and~\ref{theorem:optimal-convergence} in Section~\ref{sec:proof1} and Section~\ref{sec:proof2}, respectively.

\textbf{Notation.}
In the following text, we write $a \lesssim b$ for $a, b \in  \R$ if there exists a constant $C > 0$ (which is independent of the mesh width $h$) such that $a \leq C \, b$.
If there holds $a \lesssim b \lesssim a$, we abbreviate this by $a \simeq b$.
Furthermore, we denote by $\#A$ the cardinality of a finite set $A$ and by $|\omega|$ the $d$-dimensional Lebesgue measure of a subset $\omega \subset \R^d$.

\section{Goal-oriented adaptive finite element method}\label{sec:fem}

\subsection{Variational formulation}

Defining the symmetric bilinear form 
\begin{align}
	a(u, v) := \int_\Omega \A \nabla u \cdot \nabla v \d{x} + \int_\Omega cuv \d{x},
\end{align}
we suppose that $a(\cdot,\cdot)$ is continuous and elliptic on $H^1_0(\Omega)$ and thus fits into the setting of the Lax--Milgram lemma, i.e., there exist constants $0 < \Cell \leq \Ccnt < \infty$ such that
\begin{equation*}
	\Cell \norm{u}{H^1_0(\Omega)}^2
	\leq
	a(u, u)
	\quad\text{and}\quad
	a(u,v) \leq \Ccnt \norm{u}{H^1_0(\Omega)} \norm{v}{H^1_0(\Omega)}
	\quad\text{for all }
	u,v \in H^1_0(\Omega).
\end{equation*}
In particular, $a(\cdot, \cdot)$ is a scalar product that yields an equivalent norm $\enorm{v}^2 := a(v,v)$ on $H^1_0(\Omega)$.
The weak formulation of~\eqref{eq:strongform} reads
\begin{align}\label{eq:primal:formulation}
	a(u^\star, v) = F(v) := \int_\Omega \big( fv \d{x} - \f \cdot \nabla v \big) \d{x}
	\quad \text{for all } v \in H^1_0(\Omega).
\end{align}
The Lax--Milgram lemma proves existence and uniqueness of the solution $u^\star \in H^1_0(\Omega)$ of~\eqref{eq:primal:formulation}.
The same argument applies and proves that the dual problem 
\begin{align}\label{eq:dual:formulation}
	a(v, z^\star) =  G(v)
	\quad \text{for all } v \in H^1_0(\Omega)
\end{align}
admits a unique solution $z^\star \in H^1_0(\Omega)$, where the linear goal functional $G \in H^{-1}(\Omega) := H^1_0(\Omega)'$ is defined by~\eqref{eq:goal-functional}.

\begin{remark}
\label{rem:neumann-boundary}
	For ease of presentation, we restrict our model problem~\eqref{eq:strongform} to homogeneous Dirichlet boundary conditions.
	We note, however, that for mixed homogeneous Dirichlet and inhomogeneous Neumann boundary conditions our main results hold true with the obvious modifications.
	In particular, with the partition $\partial \Omega = \overline{\Gamma}_D \cup \overline{\Gamma}_N$ into Dirichlet boundary $\Gamma_D$ with $|\Gamma_D| > 0$ and Neumann boundary $\Gamma_N$, the space $H^1_0(\Omega)$ (and its discretization) has to be replaced by $H^1_D(\Omega) := \set{v \in H^1(\Omega)}{v|_{\Gamma_D} = 0 \text{ in the sense of traces}}$ and the Neumann data has to be given in $L^2(\Gamma_N)$.
	Furthermore, the coefficient $\f$ must vanish in a neighborhood of $\Gamma_N$ to go from the strong form~\eqref{eq:strongform} to the weak form~\eqref{eq:primal:formulation} via integration by parts.
\end{remark}

\subsection{Finite element discretization and solution}

For a conforming triangulation  $\TT_\coarse$ of $\Omega$ into compact simplices and a polynomial degree $p \ge 1$, let
\begin{align}
	\XX_\coarse := \set{v_\coarse \in H^1_0(\Omega)}{\forall T \in \TT_\coarse \quad v_\coarse|_T \text{ is a polynomial of degree } \le p}.
\end{align}
To obtain conforming finite element approximations $u^\star \approx u_\coarse \in \XX_\coarse$ and $z^\star \approx z_\coarse \in \XX_\coarse$, we consider the Galerkin discretizations of~\eqref{eq:primal:formulation}--\eqref{eq:dual:formulation}.
First, we note that the Lax--Milgram lemma yields the existence and uniqueness of \emph{exact} discrete solutions $u_\coarse^\star, z_\coarse^\star \in \XX_\coarse$, i.e., there holds that
\begin{align}
\label{eq:discrete:formulation}
	a(u_\coarse^\star, v_\coarse) = F(v_\coarse)
	\quad \text{and} \quad
	a(v_\coarse, z_\coarse^\star) =  G(v_\coarse)
	\,\,  \text{ for all } v_\coarse \in \XX_\coarse.
\end{align}
In practice, the discrete systems~\eqref{eq:discrete:formulation} are rarely solved exactly (or up to machine precision).
Instead, a suitable iterative solver is employed, which yields \emph{approximate} discrete solutions $u_{\coarse}^{m}, z_{\coarse}^{n} \in \XX_\coarse$.
We suppose that this iterative solver is contractive, i.e., for all $m, n \in \N$, it holds that
\begin{align}\label{eq:solver}
	\enorm{u_\coarse^\star - u_{\coarse}^{m}}
	\leq
	\qctr \, \enorm{u_\coarse^\star - u_{\coarse}^{m-1}}
	\,\, \text{ and } \,\, 
	\enorm{z_\coarse^\star - z_{\coarse}^{n}}
	\leq
	\qctr \, \enorm{z_\coarse^\star - z_{\coarse}^{n-1}},
\end{align}
where $0 < \qctr < 1$ is a generic constant and, in particular, independent of $\XX_\coarse$.
Assumption~\eqref{eq:solver} is satisfied, e.g., for an optimally preconditioned conjugate gradient (PCG) method (see~\cite{cnx12}) or geometric multigrid solvers (see~\cite{wz17}); see also the discussion in~\cite{banach2}.
We note that these solvers are also guaranteed to satisfy the \emph{realistic assumptions} from~\cite{stevenson2007,ms2009} (which require that any initial energy error can be improved by a factor $0<\tau<1$ at $\OO(|\log(\tau)|\#\TT_\coarse)$ cost).
However, while~\eqref{eq:solver} is slightly less general, it allows to prove \emph{full linear convergence}; see Theorem~\ref{theorem:linear-convergence} below.

\subsection{Discrete goal quantity}

To approximate $G(u^\star)$, we proceed as in~\cite{gs2002}:
For any $u_\coarse, z_\coarse \in \XX_\coarse$, it holds that
\begin{align*}
	G(u^\star) - G(u_\coarse) 
	= G(u^\star - u_\coarse)
	\stackrel{\mathclap{\eqref{eq:dual:formulation}}}{=}
	a(u^\star &- u_\coarse , z^\star)
	= a(u^\star - u_\coarse , z^\star - z_\coarse) + a(u^\star - u_\coarse , z_\coarse)
	\\&
	\stackrel{\mathclap{\eqref{eq:primal:formulation}}}{=}
	a(u^\star - u_\coarse , z^\star - z_\coarse) + \big[ F(z_\coarse) - a(u_\coarse , z_\coarse) \big].
\end{align*}
Defining the \emph{discrete quantity of interest}
\begin{align}
\label{eq:def:goal}
	G_\coarse(u_\coarse, z_\coarse) := G(u_\coarse) + \big[ F(z_\coarse) - a(u_\coarse , z_\coarse) \big],
\end{align}
the goal error can be controlled by means of the Cauchy--Schwarz inequality
\begin{equation}
\label{eq:goal-norm-estimate}
	\big| G(u^\star) - G_\coarse(u_\coarse, z_\coarse) \big|
	\le \big| a(u^\star - u_\coarse , z^\star - z_\coarse) \big| 
	\le \enorm{u^\star - u_\coarse} \, \enorm{z^\star - z_\coarse}.
\end{equation}
We note that the additional term in~\eqref{eq:def:goal} is the residual of the discrete primal problem~\eqref{eq:discrete:formulation} evaluated at an arbitrary function $z_\coarse \in \XX_\coarse$ and hence $G(u_\coarse^\star) = G_\coarse(u_\coarse^\star, z_\coarse)$.

In the following, we design an adaptive algorithm that provides a computable upper bound to~\eqref{eq:goal-norm-estimate} which tends to zero at optimal algebraic rate with respect to the number of elements $\#\TT_\coarse$ as well as with respect to the total computational cost.

\subsection{Mesh refinement}

Let $\TT_0$ be a given conforming triangulation of $\Omega$.
We suppose that the mesh-refinement is a deterministic and fixed strategy, e.g., newest vertex bisection~\cite{stevenson2008}. 
For each conforming triangulation $\TT_\coarse$ and marked elements $\MM_\coarse \subseteq \TT_\coarse$, let $\TT_\fine := \refine(\TT_\coarse,\MM_\coarse)$ be the coarsest conforming triangulation, where all $T \in \MM_\coarse$ have been refined, i.e., $\MM_\coarse \subseteq \TT_\coarse \backslash \TT_\fine$. 
We write $\TT_\fine \in \T(\TT_\coarse)$, if $\TT_\fine$ results from $\TT_\coarse$ by finitely many steps of refinement.
To abbreviate notation, let $\T:=\T(\TT_0)$.
We note that the order on $\T$ is respected by the finite element spaces, i.e., $\TT_\fine \in \T(\TT_\coarse)$ implies that $\XX_\coarse \subseteq \XX_\fine$.

We further suppose that each refined element has at least two sons, i.e.,
\begin{align}\label{eq:mesh-sons}
	\#(\TT_\coarse \backslash \TT_\fine) + \#\TT_\coarse \leq \#\TT_\fine
	\quad \text{for all }
	\TT_\coarse \in \T \text{ and all } \TT_\fine \in \T(\TT_\coarse),
\end{align}
and that the refinement rule satisfies the mesh-closure estimate
\begin{align}\label{eq:mesh-closure}
	\#\TT_\ell - \#\TT_0 \leq \Ccls\,\sum_{j=0}^{\ell-1}\#\MM_j
	\quad \text{for all }
	\ell \in \N,
\end{align}
where $\Ccls>0$ depends only on $\TT_0$.
For newest vertex bisection, this has been proved under an additional admissibility assumption on $\TT_0$ in~\cite{bdd2004, stevenson2008} and for 2D even without any additional assumption in \cite{kpp2013}.
Finally, we suppose that the overlay estimate holds, i.e., for all triangulations $\TT_\coarse,\TT_\fine \in \T$, there exists a common refinement $\TT_\coarse \oplus \TT_\fine \in \T(\TT_\coarse) \cap \T(\TT_\fine)$ which satisfies that
\begin{align}\label{eq:mesh-overlay}
\#(\TT_\coarse \oplus \TT_\fine) \leq \#\TT_\coarse + \#\TT_\fine - \#\TT_0,
\end{align}
which has been proved in~\cite{stevenson2007,ckns2008} for newest vertex bisection.

\subsection{Estimator properties}

For $\TT_\coarse \in \T$ and $v_\coarse \in \XX_\coarse$, let
\begin{equation*}
	\eta_\coarse(T, v_\coarse) \geq 0
	\quad\text{and}\quad
	\zeta_\coarse(T, v_\coarse) \geq 0
	\quad\text{for all }
	T \in \TT_\coarse
\end{equation*}
be given refinement indicators.
For $\mu_\coarse \in \{\eta_\coarse, \zeta_\coarse\}$, we use the usual convention that
\begin{align}
	\mu_\coarse(v_\coarse) := \mu_\coarse(\TT_\coarse, v_\coarse),
	\quad \text{where} \quad
	\mu_\coarse(\UU_\coarse, v_\coarse) = \bigg( \sum_{T \in \UU_\coarse} \mu_\coarse(T, v_\coarse)^2 \bigg)^{1/2}
\end{align}
for all $v_\coarse \in \XX_\coarse$ and all $\UU_\coarse \subseteq \TT_\coarse$.

We suppose that the estimators $\eta_\coarse$ and $\zeta_\coarse$ satisfy the so-called \emph{axioms of adaptivity} (which are designed for, but not restricted to, weighted-residual error estimators) from~\cite{axioms}:
There exist constants $\Cstab, \Crel, \Cdrel > 0$ and $0 < \qred < 1$ such that for all $\TT_\coarse \in \T(\TT_0)$ and all $\TT_\fine \in \T(\TT_\coarse)$, the following assumptions are satisfied:
\renewcommand{\theenumi}{{A\arabic{enumi}}}
\begin{enumerate}
	\bf
	\item\label{assumption:stab} Stability:
	\rm 
	For all $v_\fine \in \XX_\fine$, $v_\coarse \in \XX_\coarse$, and $\UU_\coarse \subseteq \TT_\fine \cap \TT_\coarse$, it holds that
	\begin{align*}
		\big| \eta_\fine(\UU_\coarse, v_\fine) - \eta_\coarse(\UU_\coarse, v_\coarse) \big|
		+ \big| \zeta_\fine(\UU_\coarse, v_\fine) - \zeta_\coarse(\UU_\coarse, v_\coarse) \big|
		&\le
		\Cstab \, \enorm{v_\fine - v_\coarse}.
	\end{align*}
	\bf
	\item\label{assumption:red} Reduction:
	\rm
	For all $v_\coarse \in \XX_\coarse$, it holds that
	\begin{align*}
		\eta_\fine(\TT_\fine \backslash \TT_\coarse, v_\coarse) 
		&\le
		\qred \, \eta_\coarse(\TT_\coarse \backslash \TT_\fine, v_\coarse)
		\quad \text{ and } \quad
		\zeta_\fine(\TT_\fine \backslash \TT_\coarse, v_\coarse) 
		\le
		\qred \, \zeta_\coarse(\TT_\coarse \backslash \TT_\fine, v_\coarse).
	\end{align*}
	\bf
	\item\label{assumption:rel} Reliability:
	\rm
	The Galerkin solutions $u_\coarse^\star, z_\coarse^\star \in \XX_\coarse$ to~\eqref{eq:discrete:formulation} satisfy that
	\begin{align*}
		\enorm{u^\star - u_\coarse^\star} 
		&\le
		\Crel \, \eta_\coarse(u_\coarse^\star)
		\quad \text{ and } \quad
		\enorm{z^\star - z_\coarse^\star} 
		\le
		\Crel \, \zeta_\coarse(z_\coarse^\star).
	\end{align*} 
	\bf
	\item\label{assumption:drel} Discrete reliability:
	\rm
	The Galerkin solutions $u_\coarse^\star, z_\coarse^\star \in \XX_\coarse$ and $u_\fine^\star, z_\fine^\star \in \XX_\fine$ to~\eqref{eq:discrete:formulation} satisfy that
	\begin{align*}
		\enorm{u_\fine^\star - u_\coarse^\star} 
		&\le
		\Cdrel \, \eta_\coarse(\TT_\coarse \backslash \TT_\fine, u_\coarse^\star)
		\quad \text{ and } \quad
		\enorm{z_\fine^\star - z_\coarse^\star} 
		\le
		\Cdrel \, \zeta_\coarse(\TT_\coarse \backslash \TT_\fine, z_\coarse^\star). 
	\end{align*} 
\end{enumerate}

By assumptions~\eqref{assumption:stab} and~\eqref{assumption:rel}, we can estimate for every discrete function $w_\coarse \in \XX_\coarse$ the errors in the energy norm of the primal and the dual problem by
\begin{equation*}
	\enorm{u^\star - w_\coarse}
	\leq
	C \, \big[ \eta_\coarse(w_\coarse) + \enorm{u_\coarse^\star - w_\coarse} \big]
	\quad \text{and} \quad
	\enorm{z^\star - w_\coarse}
	\leq
	C \, \big[ \zeta_\coarse(w_\coarse) + \enorm{z_\coarse^\star - w_\coarse} \big],
\end{equation*}
respectively, where $C = \max\{ \Crel, \Crel\Cstab + 1 \} > 0$.
Together with~\eqref{eq:goal-norm-estimate}, we then obtain that the goal error for approximations $u_\coarse^m \approx u_\coarse^\star$ and $z_\coarse^n \approx z_\coarse^\star$ in $\XX_\coarse$ is bounded by
\begin{align}
\label{eq:goal:aposteriori}
	\big| G(u^\star) - G_\coarse(u_{\coarse}^{m}, z_{\coarse}^{n}) \big|
	\leq C^2 \,
	\big[ \, \eta_\coarse(u_{\coarse}^{m}) + \enorm{u_\coarse^\star - u_{\coarse}^{m}} \, \big] \,
	\big[ \, \zeta_\coarse(z_{\coarse}^{n}) + \enorm{z_\coarse^\star - z_{\coarse}^{n}} \, \big].
\end{align}
In the following sections, we provide building blocks for our adaptive algorithm that allow to control the arising estimators (by a suitable marking strategy) as well as the arising norms in the upper bound of~\eqref{eq:goal:aposteriori} (by an appropriate stopping criterion for the iterative solver).

\subsection{Marking strategy}

We suppose that the refinement indicators $\eta_\coarse(T, u_\coarse^m)$ and $\zeta_\coarse(T, z_\coarse^n)$ for some $m,n \in \N$ are used to mark a subset $\MM_\coarse \subseteq \TT_\coarse$ of elements for refinement, which, for fixed marking parameter $0 < \theta \leq 1$, satisfies that
\begin{equation}
\label{eq:abstract-marking}
    2 \theta \eta_\coarse(u_\coarse^m)^2 \zeta_\coarse(z_\coarse^n)^2
    \le
	\eta_\coarse(\MM_\coarse, u_\coarse^m)^2 \zeta_\coarse(z_\coarse^n)^2
	+ \zeta_\coarse(\MM_\coarse, z_\coarse^n)^2 \eta_\coarse(u_\coarse^m)^2.
\end{equation}

\begin{remark}\label{rem:marking}
	Given $0 < \vartheta \leq 1$, possible choices of marking strategies satisfying assumption~\eqref{eq:abstract-marking} are the following:
	\begin{enumerate}[label={\rm(\alph*)}]
		\item The strategy proposed in~\cite{bet2011} defines the weighted estimator
		\begin{equation*}
			\rho_\coarse(T, u_\coarse^m, z_\coarse^n)^2
			:=
			\eta_\coarse(T, u_\coarse^m)^2 \zeta_\coarse(z_\coarse^n)^2
			+ \eta_\coarse(u_\coarse^m)^2 \zeta_\coarse(T, z_\coarse^n)^2
		\end{equation*}
		and then determines a set $\MM_\coarse \subseteq \TT_\coarse$ such that
		\begin{equation}
		\label{eq:bet-marking}
			\vartheta \, \rho_\coarse(u_\coarse^m, z_\coarse^n)
			\le
			\rho_\coarse(\MM_\coarse, u_{\coarse}^{m}, z_\coarse^n)
		\end{equation}
		which is the D\"orfler marking criterion introduced in~\cite{doerfler1996} and well-known in the context of AFEM analysis; see, e.g.,~\cite{axioms}.
		This strategy satisfies~\eqref{eq:abstract-marking} with $\theta = \vartheta^2$.
		
		\item The strategy proposed in~\cite{ms2009} determines sets $\overline\MM_\coarse^{u}, \overline\MM_\coarse^{z} \subseteq \TT_\coarse$ such that
		\begin{equation}
		\label{eq:doerfler}
			\vartheta \, \eta_\coarse(u_{\coarse}^{m})
			\le
			\eta_\ell(\overline\MM_\coarse^{u},u_{\coarse}^{m})
			\quad \text{and} \quad
			\vartheta \, \zeta_\coarse(z_{\coarse}^{n})
			\le
			\zeta_\coarse(\overline\MM_\coarse^{z}, z_{\coarse}^{n})
		\end{equation}
		and then chooses $\MM_\coarse := \arg\min \{ \, \#\overline\MM_\coarse^{u} \,,\, \#\overline\MM_\coarse^{z} \, \}$.
		This strategy satisfies~\eqref{eq:abstract-marking} with $\theta = \vartheta^2/2$.
		
		\item A more aggressive variant of~{\rm(b)} was proposed in~\cite{fpz2014}:
		Let $\overline\MM_\coarse^{u}$ and $\overline\MM_\coarse^{z}$ as above.
		Then, choose $\MM_\coarse^{u} \subseteq \overline\MM_\coarse^{u}$ and $\MM_\coarse^{z} \subseteq \overline\MM_\coarse^{z}$ with $\#\MM_\coarse^{u} =  \#\MM_\coarse^{z} =  \min\{ \, \#\overline\MM_\coarse^{u} \,,\, \#\overline\MM_\coarse^{z} \, \}$.
		Finally, define $\MM_\coarse := \MM_\coarse^{u} \cup \MM_\coarse^{z}$.
		Again, this strategy satisfies~\eqref{eq:abstract-marking} with $\theta = \vartheta^2/2$.
	\end{enumerate}
	Note that our main results of Theorem~\ref{theorem:linear-convergence} and~\ref{theorem:optimal-convergence} below hold true for all presented marking criteria {\rm(a)--(c)}.
	For our numerical experiments, we focus on criterion {\rm(a)}, which empirically tends to achieve slightly better performance in practice.
\end{remark}

\subsection{Adaptive algorithm}\label{subsec:algorithm}

Any adaptive algorithm strives to drive down the bound in~\eqref{eq:goal:aposteriori}.
However, the errors of the iterative solver, $\enorm{u_\coarse^\star - u_{\coarse}^{m}}$ and $\enorm{z_\coarse^\star - z_{\coarse}^{n}}$, cannot be computed in general since the exact discrete solutions $u_\coarse^\star, z_\coarse^\star \in \XX_\coarse$ to~\eqref{eq:discrete:formulation} are unknown and will not be computed.
Thus, we note that~\eqref{eq:solver} and the triangle inequality prove that
\begin{subequations}
	\label{eq:contraction}
	\begin{align}
		(1 - \qctr) \, \enorm{u_\coarse^\star - u_{\coarse}^{m-1}}
		\le 
		\enorm{u_{\coarse}^{m} - u_{\coarse}^{m-1}}
		\le
		(1 + \qctr) \, \enorm{u_\coarse^\star - u_{\coarse}^{m-1}}
	\end{align}
	as well as 
	\begin{align}
		(1 - \qctr) \, \enorm{z_\coarse^\star - z_{\coarse}^{n-1}}
		\le
		\enorm{z_{\coarse}^{n} - z_{\coarse}^{n-1}}
		\le
		(1 + \qctr) \, \enorm{z_\coarse^\star - z_{\coarse}^{n-1}}.
	\end{align}
\end{subequations}
With $\Cgoal = \max\{ \Crel, \Crel\Cstab + 1 \} \, \big(1 + \qctr / (1-\qctr)\big)$, \eqref{eq:goal:aposteriori} leads to
\begin{align}
\label{eq2:goal:aposteriori}
	\hspace*{-2mm}
	\big| G(u^\star) - G_\coarse(u_{\coarse}^{m}, z_{\coarse}^{n}) \big|
	\leq
	\Cgoal^2 \, \big[ \eta_\coarse(u_{\coarse}^{m}) + \enorm{u^m_\coarse - u^{m-1}_\coarse} \big]  \big[ \zeta_\coarse(z_{\coarse}^{n}) + \enorm{z^n_\coarse - z^{n-1}_\coarse} \big],
\end{align}
which is a computable upper bound to the goal error if $m, n \geq 1$.
Moreover, given some $\lctr > 0$, this motivates to stop the iterative solvers as soon as
\begin{align*}
	\enorm{u_{\coarse}^{m} - u_{\coarse}^{m-1}} \le \lctr \, \eta_\coarse(u_{\coarse}^{m})
	\quad \text{ and } \quad
	\enorm{z_{\coarse}^{n} - z_{\coarse}^{n-1}} \le \lctr \, \zeta_\coarse(z_{\coarse}^{n})
\end{align*}
to equibalance the contributions of the upper bound in~\eqref{eq2:goal:aposteriori}, which is very similar to the stopping criteria considered in the seminal works~\cite{stevenson2007,ms2009}; further alternative stopping criteria are introduced and analyzed below.
Overall, we thus consider the following adaptive algorithm.

\begin{algorithm}\label{algorithm}
	Let $u_{0}^{0}, z_{0}^{0} \in \XX_0$ be initial guesses. Let $0 < \theta \le 1$ as well as $\lctr > 0$ be arbitrary but fixed marking parameters.
	For all $\ell = 0,1,2,\dots$, perform the following steps~{\rm(i)--(vi)}:
	\begin{itemize}
		\item[\rm(i)] Employ (at least one step of) the iterative solver to compute iterates $u_{\ell}^{1}, \dots, u_{\ell}^{m}$ and $z_{\ell}^{1}, \dots, z_{\ell}^{n}$ together with the corresponding refinement indicators $\eta_\ell(T,u_{\ell}^{k})$ and $\zeta_\ell(T, z_{\ell}^{k})$ for all $T \in \TT_\ell$, until 
		\begin{align}\label{eq:stopping}
			\enorm{u_{\ell}^{m} - u_{\ell}^{ m-1}} \le \lctr \, \eta_\ell(u_{\ell}^{m})
			\quad \text{and} \quad
			\enorm{z_{\ell}^{n} - z_{\ell}^{ n-1}} \le \lctr \, \zeta_\ell(z_{\ell}^{n}).
		\end{align}
		
		\item[\rm(ii)] Define $\m(\ell) := m$ and $\n(\ell) := n$.
		
		\item[\rm(iii)] If $\eta_\ell(u_\ell^m) = 0$ or $\zeta_\ell(z_\ell^m) = 0$, then define $\underline{\ell} := \ell$ and terminate.
		
		\item[\rm(iv)] Otherwise, find a set $\MM_\ell \subseteq \TT_\ell$ such that the marking criterion~\eqref{eq:abstract-marking} is satisfied.
		
		\item[\rm(v)] Generate $\TT_{\ell+1} := \refine(\TT_\ell, \MM_\ell)$.
		
		\item[\rm(vi)] Define the initial guesses $u_{\ell+1}^{ 0} := u_{\ell}^{m}$ and $z_{\ell+1}^{ 0} := z_{\ell}^{n}$ for the iterative solver.
	\end{itemize}
\end{algorithm}

\begin{remark}
\label{rem:marking-parameters}
	Theorem~\ref{theorem:linear-convergence} below proves (linear) convergence for any choice of the marking parameters $0 < \theta \leq 1$ and $\lctr > 0$, and for any of the marking strategies from Remark~\ref{rem:marking}.
	Theorem~\ref{theorem:optimal-convergence} below proves optimal convergence rates (with respect to the number of elements and the total computational cost) if both parameters are sufficiently small (see~\eqref{eq:small-theta} for the precise condition) and if the set $\MM_\ell$ is constructed by one of the strategies from Remark~\ref{rem:marking}, where the respective sets have quasi-minimal cardinality.
\end{remark}

\begin{remark}
\label{rem:estimation}
	Note that Algorithm~\ref{algorithm}{\rm (i)} requires to evaluate the error estimator after each solver step.
	Clearly, it would be favorable to replace $\eta_\ell(u_\ell^m)$ (resp.\ $\zeta_\ell(z_\ell^n)$) by $\eta_\ell(u_\ell^0)$ (resp.\ $\zeta_\ell(z_\ell^0)$) in~\eqref{eq:stopping}.
	Arguing as in~\cite[Lemma~8]{fp18}, this allows to prove convergence of the adaptive strategy, but full linear convergence (Theorem~\ref{theorem:linear-convergence} below) and optimal convergence rates (Theorem~\ref{theorem:optimal-convergence} below) are exptected to fail.
\end{remark}

For each adaptive level $\ell$, Algorithm~\ref{algorithm} performs at least one solver step to compute $u_{\ell}^{m}$ as well as one solver step to compute $z_{\ell}^{n}$.
By definition, $\m(\ell) \ge 1$ is the solver step, for which the discrete solution $u_{\ell}^{ \m(\ell)}$ is accepted (to contribute to the set of marked elements $\MM_\ell$).
Analogously, $\n(\ell) \ge 1$ is  the solver step, for which the discrete solution $z_{\ell}^{ \n(\ell)}$ is accepted (to contribute to $\MM_\ell$).
If the iterative solver for either the primal or the dual problem fails to terminate for some level $\ell \in \N_0$, i.e., \eqref{eq:stopping}~cannot be achieved for finite $m$, or $n$, we define $\m(\ell) := \infty$, or $\n(\ell) := \infty$, respectively, and $\underline{\ell} := \ell$.
With $\k(\ell) := \max\{\m(\ell), \n(\ell)\}$, we define
\begin{align}\label{eq2:stopping}
\begin{split}
	u_{\ell}^{k} &:= u_{\ell}^{\m(\ell)} \hphantom{z_{\ell}^{\n}} \text{for all } k \in \N \text{ with } \m(\ell) < k \le \k(\ell),
	\\
	z_{\ell}^{k} &:= z_{\ell}^{\n(\ell)} \hphantom{u_{\ell}^{\m}} \text{for all } k \in \N \text{ with } \n(\ell) < k \le \k(\ell).
\end{split}
\end{align}
For ease of presentation, we omit the $\ell$-dependence of the indices for final iterates $\m(\ell)$, $\n(\ell)$, and $\k(\ell)$ in the following, if they appear as upper indices and write, e.g., $u_{\ell}^{\m} := u_{\ell}^{\m(\ell)}$ and $u_{\ell}^{\m-1} := u_{\ell}^{\m(\ell)-1}$.
If Algorithm~\ref{algorithm} does not terminate in step~{\rm(iii)} for some $\ell \in \N$, then we define $\underline{\ell} := \infty$.
To formulate the convergence of Algorithm~\ref{algorithm}, we define the ordered set
\begin{align}
\QQ := \set{(\ell,k) \in \N_0^2}{\ell \leq \underline{\ell} \text{ and } 1 \le k \le \k(\ell)},
\quad \text{where} \quad
|(\ell,k)| := k + \sum_{j=0}^{\ell-1} \k(j).
\end{align}
Note that $|(\ell,k)|$ is proportional to the overall number of solver steps to compute the estimator product $\eta_\ell(u_\ell^k)\zeta_\ell(z_\ell^k)$.
Additionally, we sometimes require the notation
\begin{align}
    \QQ_0 := \set{(\ell,k)\in\N_0^2}{\ell\le\underline{\ell} \text{ and } 0\le k \le \k(\ell)}
    = \QQ \cup \set{(\ell,0)\in\N_0^2}{\ell\le \underline\ell}.
\end{align}
To estimate the work necessary to compute a pair $(u_{\ell}^{k}, z_{\ell}^{k}) \in \XX_\ell \times \XX_\ell$, we make the following assumptions which are usually satisfied in practice:
\begin{itemize}
	\item The iterates $u_{\ell}^{k}$ and $z_{\ell}^{k}$ are computed in parallel and each step of the solver in  Algorithm~\ref{algorithm}(i) can be done in linear complexity $\OO(\#\TT_\ell)$;
	
	\item Computation of all indicators $\eta_\ell(T,u_{\ell}^{k})$ and $\zeta_\ell(T, z_{\ell}^{k})$ for $T \in \TT_\ell$ requires $\OO(\#\TT_\ell)$ steps;
	
	\item The marking in Algorithm~\ref{algorithm}{\rm(iv)} can be performed at linear cost $\OO(\#\TT_\ell)$ (according to~\cite{stevenson2007} this can be done for the strategies outlined in Remark~\ref{rem:marking} with $\MM_\ell$ having almost minimal cardinality; moreover, we refer to a recent own algorithm in~\cite{pp2019} with linear cost even for $\MM_\ell$ having minimal cardinality);
	
	\item We have linear cost $\OO(\#\TT_\ell)$ to generate the new mesh $\TT_{\ell+1}$.
\end{itemize}
Since a step $(\ell,k) \in \QQ$ of Algorithm~\ref{algorithm} depends on the full history of preceding steps, the total work spent to compute $(u_{\ell}^{k}, z_{\ell}^{k}) \in \XX_\ell \times \XX_\ell$ is then of order
\begin{equation}
\label{eq:work}
	\work(\ell,k) := \sum_{\substack{ (\ell',k') \in \QQ \\ |(\ell',k')| \le |(\ell,k)| }} \#\TT_{\ell'}
	\quad \text{for all }
	(\ell,k) \in \QQ.
\end{equation}

Finally, we note that Algorithm~\ref{algorithm}{\rm (vi)} employs \emph{nested iteration} to obtain the initial guesses $u^0_{\ell+1}, z^0_{\ell+1}$ of the solver from the final iterates $u^{\m}_{\ell}, z^{\n}_{\ell}$ for the mesh $\TT_\ell$.
According to~\eqref{eq2:goal:aposteriori}, this allows for \textsl{a posteriori} error control for all indices $(\ell, k) \in \QQ_0 \setminus \{(0,0)\}$ beyond the initial step.


\section{Main results}\label{sec:results}

\subsection{Linear convergence with optimal rates}

Our first main result states linear convergence of the quasi-error product
\begin{equation}
\label{eq:lambda-quasi-error}
	\Lambda_\ell^k
	:=
	\big[ \, \enorm{u_{\ell}^{\star} - u_{\ell}^{k}} + \eta_\ell(u_{\ell}^{k})  \, \big]
	\big[ \, \enorm{z_{\ell}^{\star} - z_{\ell}^{k}} +  \zeta_\ell(z_{\ell}^{k}) \, \big]
	\quad\text{for all }
	(\ell,k) \in \QQ_0
\end{equation}	
for every choice of the stopping parameter $\lctr > 0$.
Recall from \eqref{eq:goal:aposteriori} that the quasi-error product is an upper bound for the error $|G(u^\star) - G_\ell(u_\ell^k,z_\ell^k)|$. 
Moreover, if $k=\k(\ell)$, then  \eqref{eq:contraction} and \eqref{eq:stopping} give that $\Lambda_\ell^\k\simeq \eta_\ell(u_\ell^\k)\zeta_\ell(z_\ell^\k)$.

\begin{theorem}\label{theorem:linear-convergence}
	Suppose \eqref{assumption:stab}--\eqref{assumption:rel}. Suppose that $0 < \theta \le 1$ and $\lctr > 0$.
	Then, Algorithm~\ref{algorithm} satisfies linear convergence in the sense of
	\begin{equation}
	\label{eq:linear-convergence}
		\hspace{-3pt}
		\Lambda_{\ell'}^{k'}
		\leq
		\Clin \qlin^{|(\ell'\!,k')| - |(\ell,k)|} \, \Lambda_{\ell}^{k}
		\quad \text{for all } (\ell,k),(\ell'\!,k') \in \QQ \cup \{(0,0)\}
		\text{ with } |(\ell'\!,k')| \geq |(\ell,k)|.
	\end{equation}
	The constants $\Clin > 0 $ and $0 < \qlin < 1$ depend only on $\Cstab$, $\qred$, $\Crel$, $\qctr$, and the (arbitrary) adaptivity parameters $0 < \theta \le 1$ and $\lctr > 0$.
\end{theorem}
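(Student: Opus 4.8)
The plan is to establish linear convergence of the quasi-error product $\Lambda_\ell^k$ as a consequence of two separate contraction-type properties: one along the solver iterations within a fixed mesh (the index $k$ increasing, $\ell$ fixed) and one across a mesh-refinement step (passing from $(\ell,\k(\ell))$ to $(\ell+1,0)$). Since the product structure of $\Lambda_\ell^k$ is the novelty here, I would first linearize the problem: write $\Lambda_\ell^k = A_\ell^k \, B_\ell^k$ with $A_\ell^k := \enorm{u_\ell^\star - u_\ell^k} + \eta_\ell(u_\ell^k)$ and $B_\ell^k := \enorm{z_\ell^\star - z_\ell^k} + \zeta_\ell(z_\ell^k)$, and aim to control each factor. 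The key abstract device will be to show that a suitable \emph{weighted sum} $\Delta_\ell^k := A_\ell^k + \mu\,B_\ell^k$ (or rather its primal/dual analogues taken with appropriate multiplicative weights, e.g.\ $B$-scaled increments of $A$ and $A$-scaled increments of $B$) satisfies a tail-summability / uniform contraction estimate; then $\Lambda \le (\Delta)^2/4$ by AM--GM, and a quasi-monotonicity bound $\Lambda_{\ell'}^{k'} \lesssim \Delta_{\ell'}^{k'} \cdot \Delta_\ell^k$ in the other direction would transfer linear convergence of $\Delta$ to linear convergence of $\Lambda$. I expect the cleanest route, following the AFEM-with-inexact-solver template of~\cite{banach,banach2}, is to prove the estimate directly for $\Lambda$ by combining: (1) solver contraction~\eqref{eq:solver} and the equivalences~\eqref{eq:contraction}; (2) the estimator perturbation bound from~\eqref{assumption:stab}; (3) estimator reduction~\eqref{assumption:red} together with Dörfler-type marking~\eqref{eq:abstract-marking}; and (4) reliability~\eqref{assumption:rel} to absorb the exact discretization error.

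The concrete steps I would carry out are as follows. \emph{Step 1 (solver phase, fixed $\ell$).} For $k < \k(\ell)$, since the estimators $\eta_\ell,\zeta_\ell$ are Lipschitz in the discrete argument by~\eqref{assumption:stab} and the solver contracts the energy error by $\qctr$, show that $A_\ell^{k+1} \le q_{\mathrm{s}} A_\ell^k + \text{(controlled quantity)}$, and analogously for $B$; combined with the fact that while the solver runs the stopping criterion~\eqref{eq:stopping} is violated, i.e.\ $\enorm{u_\ell^m - u_\ell^{m-1}} > \lctr \eta_\ell(u_\ell^m)$, one gets that $\eta_\ell(u_\ell^k)$ is dominated by the increment, hence $A_\ell^k$ is comparable to $\enorm{u_\ell^\star - u_\ell^k}$, which contracts. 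This yields a genuine contraction $\Lambda_\ell^{k+1} \le q_{\mathrm{solver}}\,\Lambda_\ell^k$ for all $k$ in the "active" solver range (carefully handling the artificial stagnation~\eqref{eq2:stopping} where one of the two solvers has already stopped — there the corresponding factor is merely constant, but the other still contracts, and one must check the product still contracts or at worst does not grow, using that the stopped factor satisfies the stopping estimate). \emph{Step 2 (refinement phase).} Passing from level $\ell$ to $\ell+1$ with nested initial guess $u_{\ell+1}^0 = u_\ell^\m$: use discrete reliability is not needed here (only~\eqref{assumption:stab}--\eqref{assumption:rel} are assumed), so instead use that at $k=\k(\ell)$ the stopping criterion~\eqref{eq:stopping} holds, hence by~\eqref{eq:contraction} $A_\ell^\k \simeq \eta_\ell(u_\ell^\k)$ and $B_\ell^\k \simeq \zeta_\ell(z_\ell^\k)$, so $\Lambda_\ell^\k \simeq \eta_\ell(u_\ell^\k)\zeta_\ell(z_\ell^\k)$; then estimator reduction~\eqref{assumption:red} on the refined elements plus the marking property~\eqref{eq:abstract-marking} gives a strict reduction $\eta_{\ell+1}(u_\ell^\k)\zeta_{\ell+1}(z_\ell^\k) \le q_\theta\,\eta_\ell(u_\ell^\k)\zeta_\ell(z_\ell^\k)$ for some $q_\theta < 1$ depending on $\theta,\qred$ — this is exactly where the product/marking interplay~\eqref{eq:abstract-marking} (rather than two separate Dörfler estimates) is used — and a final application of~\eqref{assumption:stab} with the nested guess transfers this to $\Lambda_{\ell+1}^0$. \emph{Step 3 (telescoping / concatenation).} Combining Steps 1 and 2, one has $\Lambda$ contracting by a uniform factor $q < 1$ every time the linear index $|(\ell,k)|$ advances — with the caveat that across a refinement step the contraction may require a "look-back" over a bounded number of solver steps rather than being one-step. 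This is handled by the standard argument (as in~\cite{banach2}): show instead that for some $\lambda_0 > 0$ the quasi-error product satisfies $\Lambda_{\ell'}^{k'} \le q^{N}\Lambda_\ell^k$ whenever $|(\ell',k')| - |(\ell,k)| \ge N$ for a fixed $N$, together with uniform quasi-monotonicity $\Lambda_{\ell'}^{k'} \lesssim \Lambda_\ell^k$ for all later indices; these two combine to give~\eqref{eq:linear-convergence} with $\qlin := q^{1/N}$ and $\Clin$ absorbing the constant and the finitely many "free" steps.

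The main obstacle I anticipate is \emph{Step 1 in the degenerate regime}~\eqref{eq2:stopping}: when, say, the primal solver has terminated ($k > \m(\ell)$) but the dual solver has not, the primal factor $A_\ell^k$ is frozen while $B_\ell^k$ still contracts — one must verify that the \emph{product} is non-increasing (or at worst bounded) rather than genuinely contracting at those steps, and that these "wasted" steps are uniformly bounded in number per level so the telescoping in Step 3 survives; in fact they are bounded because each solver individually contracts geometrically toward a stopping criterion whose threshold $\lctr\eta_\ell$ does not shrink. A secondary technical point is making the constant $\Cgoal$-type bookkeeping in Step 2 uniform: the nested-iteration transfer $u_{\ell+1}^0 = u_\ell^\m$ introduces an estimator perturbation $\Cstab\enorm{u_{\ell+1}^\star - u_\ell^\m}$ which must itself be re-absorbed using reliability~\eqref{assumption:rel} and the already-established reduction — this is the usual "estimator reduction up to the exact discrete solutions" lemma and is routine but must be stated carefully since only~\eqref{assumption:stab}--\eqref{assumption:rel} (not~\eqref{assumption:drel}) are hypothesized in this theorem. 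Everything else is bookkeeping with geometric series.
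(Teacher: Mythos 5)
Your outline correctly identifies the two phases (solver iterations on a fixed mesh, then a refinement step) and two of the key mechanisms: during the solver phase the violated stopping criterion makes $\eta_\ell(u_\ell^k)$ dominated by the solver increment so the error term drives contraction, while the frozen factor only needs to be non-increasing, not contracting; and the estimator-product reduction over the refinement step is precisely what the coupled D\"orfler criterion~\eqref{eq:abstract-marking} (via Young's inequality) buys you. Both of these match the paper's Lemma~\ref{lemma:banach2-new}{\rm(iii)} and the estimator part of~{\rm(iv)}, and you also correctly anticipate the need for a small scaling parameter $\mu$ on the estimator terms.

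The genuine gap is in Step~2/Step~3: when you write $\alpha_{\ell+1}^0 = \enorm{u_{\ell+1}^\star - u_\ell^{\underline m}} \le \enorm{u_{\ell+1}^\star - u_\ell^\star} + \qctr\enorm{u_\ell^\star - u_\ell^{\underline k-1}}$, the jump $x_\ell^\star := \enorm{u_{\ell+1}^\star - u_\ell^\star}$ of the exact Galerkin solutions does \emph{not} disappear and, after multiplying out the product, contaminates $\Delta_{\ell+1}^0$ with a remainder $R_\ell$ involving terms like $\eta_\ell(u_\ell^{\underline k-1})y_\ell^\star$, $\alpha_\ell^{\underline k-1}y_\ell^\star$, and $x_\ell^\star y_\ell^\star$. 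These remainders are of the same size as $\Delta_\ell^{\underline k-1}$ on any single level (reliability and Galerkin orthogonality give $x_\ell^\star \lesssim \eta_\ell(u_\ell^\star)$, nothing smaller), so the estimate reads $\Delta_{\ell+1}^0 \le \qaux\Delta_\ell^{\underline k-1} + R_\ell$ with no per-step contraction, only quasi-monotonicity. Consequently your "fixed look-back window $N$" in Step~3 cannot be established: no finite $N$ guarantees net contraction, because $R_\ell$ can be of unit relative size at arbitrarily many (non-consecutive) levels. What the paper does instead is prove the \emph{tail-summability} $\sum_{\ell\ge\ell'} R_\ell^2 \lesssim (\Delta_{\ell'}^{\underline k-1})^2$ by the Pythagoras/Galerkin-orthogonality telescoping $\sum_\ell \enorm{u_{\ell+1}^\star - u_\ell^\star}^2 \le \enorm{u^\star - u_{\ell'}^\star}^2 \lesssim \eta_{\ell'}(u_{\ell'}^\star)^2$, combined with estimator quasi-monotonicity; this yields the summability estimate $\sum_{|(\ell',k')|\ge|(\ell,k)|}(\Delta_{\ell'}^{k'})^2 \lesssim (\Delta_\ell^k)^2$, which then gives linear convergence via the standard algebraic lemma~\cite[Lemma~4.9]{axioms}. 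This remainder-plus-tail-summability step is precisely the "technical challenge beyond [banach2]" created by the product structure and is what your plan is missing.
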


Full linear convergence implies that convergence rates with respect to degrees of freedom and with respect to total computational cost are equivalent.
From this point of view, full linear convergence indeed turns out to be the core argument for optimal complexity.

\begin{corollary}\label{corollary:optimal-convergence}
Recall the definition of the total computational cost $\work(\ell,k)$ from~\eqref{eq:work}.
	Let $r > 0$ and $C_r := \sup_{(\ell,k) \in \QQ} (\#\TT_\ell - \#\TT_0 + 1)^r \Lambda_\ell^k\in[0,\infty]$.
	Then, under the assumptions of Theorem~\ref{theorem:linear-convergence}, it holds that
	\begin{equation}
	\label{eq:rate-equivalence}
		C_r
		\leq
		\sup_{(\ell,k) \in \QQ} (\#\TT_\ell)^r \, \Lambda_\ell^k
		\leq
		\sup_{(\ell,k) \in \QQ} \work(\ell,k)^r \, \Lambda_\ell^k
		\leq
		\Crate \, C_r,
	\end{equation}
	where the constant $\Crate > 0$ depends only on $r$, $\#\TT_0$, and on the constants $\qlin, \Clin$ from Theorem~\ref{theorem:linear-convergence}.
\end{corollary}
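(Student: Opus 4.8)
The plan is to derive the chain of inequalities~\eqref{eq:rate-equivalence} from left to right. The first inequality $C_r \leq \sup_{(\ell,k)\in\QQ}(\#\TT_\ell)^r\Lambda_\ell^k$ is immediate, since $\#\TT_\ell - \#\TT_0 + 1 \leq \#\TT_\ell$ (because $\#\TT_0 \geq 1$), so the term-wise estimate $(\#\TT_\ell - \#\TT_0 + 1)^r\Lambda_\ell^k \leq (\#\TT_\ell)^r\Lambda_\ell^k$ holds for every $(\ell,k)\in\QQ$ and passes to the supremum. The second inequality is also essentially trivial: by definition~\eqref{eq:work} the work satisfies $\work(\ell,k) \geq \#\TT_\ell$ (the summand with $(\ell',k')=(\ell,k)$ alone already contributes $\#\TT_\ell$), hence $(\#\TT_\ell)^r\Lambda_\ell^k \leq \work(\ell,k)^r\Lambda_\ell^k$ term-wise, and again we pass to the supremum.

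The substance is the third inequality $\sup_{(\ell,k)\in\QQ}\work(\ell,k)^r\Lambda_\ell^k \leq \Crate\,C_r$. First I would observe that $\#\TT_{\ell'} \lesssim \#\TT_{\ell'} - \#\TT_0 + 1$ uniformly (since $\#\TT_{\ell'}\ge \#\TT_0$, we have $\#\TT_{\ell'} \le \#\TT_0(\#\TT_{\ell'} - \#\TT_0 + 1)$), so that, using the definition of $C_r$ in the form $\#\TT_{\ell'} - \#\TT_0 + 1 \leq (C_r/\Lambda_{\ell'}^{k'})^{1/r}$ for any admissible $(\ell',k')$, I can bound each summand in~\eqref{eq:work} by
\begin{equation*}
	\#\TT_{\ell'} \;\lesssim\; \#\TT_0^{\,1-1/r}\,C_r^{1/r}\,(\Lambda_{\ell'}^{k'})^{-1/r}.
\end{equation*}
Summing this over all $(\ell',k')\in\QQ$ with $|(\ell',k')|\le|(\ell,k)|$ gives
\begin{equation*}
	\work(\ell,k) \;\lesssim\; C_r^{1/r}\sum_{|(\ell',k')|\le|(\ell,k)|}(\Lambda_{\ell'}^{k'})^{-1/r}.
\end{equation*}
Now full linear convergence~\eqref{eq:linear-convergence} from Theorem~\ref{theorem:linear-convergence}, applied in the form $\Lambda_{\ell}^{k} \leq \Clin\,\qlin^{|(\ell,k)|-|(\ell',k')|}\Lambda_{\ell'}^{k'}$ for $|(\ell',k')|\le|(\ell,k)|$, yields $(\Lambda_{\ell'}^{k'})^{-1/r} \leq \Clin^{1/r}\,\qlin^{(|(\ell,k)|-|(\ell',k')|)/r}(\Lambda_{\ell}^{k})^{-1/r}$, so the geometric series in $|(\ell,k)|-|(\ell',k')|$ over the indices $(\ell',k')$ preceding $(\ell,k)$ converges to a finite constant depending only on $\qlin,\Clin,r$. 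This gives $\work(\ell,k) \lesssim C_r^{1/r}(\Lambda_\ell^k)^{-1/r}$, i.e. $\work(\ell,k)^r\Lambda_\ell^k \lesssim C_r$ with an implied constant depending only on $r$, $\#\TT_0$, $\qlin$, $\Clin$; taking the supremum over $(\ell,k)\in\QQ$ finishes the proof.

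The only mild subtlety — and the place to be careful — is the bookkeeping when $C_r = \infty$ (then~\eqref{eq:rate-equivalence} holds trivially, so one may assume $C_r<\infty$) and the observation that the index set $\{(\ell',k')\in\QQ : |(\ell',k')|\le|(\ell,k)|\}$ is precisely the set of steps preceding $(\ell,k)$ in the linear order on $\QQ$, so that linear convergence~\eqref{eq:linear-convergence} is applicable to each pair. There is no genuine obstacle here: once full linear convergence is available, the geometric-summation argument is routine, which is exactly the point emphasized in the text that ``full linear convergence indeed turns out to be the core argument for optimal complexity.''
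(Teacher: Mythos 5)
Your proof is correct and follows essentially the same route as the paper: reduce to the third inequality, use $\#\TT_{\ell'}\le\#\TT_0(\#\TT_{\ell'}-\#\TT_0+1)$ together with the definition of $C_r$ to bound each summand of $\work(\ell,k)$, and then sum the resulting geometric series obtained from full linear convergence. (One cosmetic slip: the bound on $\#\TT_{\ell'}$ should carry a factor $\#\TT_0$, not $\#\TT_0^{1-1/r}$, but this is absorbed by your $\lesssim$ and does not affect the argument.)
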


\begin{proof}
	The first two estimates in~\eqref{eq:rate-equivalence} are obvious.
	It remains to prove the last estimate in~\eqref{eq:rate-equivalence}.
	To this end, note that it follows from the definition of $C_r$ that
	\begin{equation*}
		\#\TT_\ell - \#\TT_0 + 1
		\leq
		\big( \Lambda_\ell^k \big)^{-1/r} \, C_r^{1/r}
		\quad\text{for all }
		(\ell, k) \in \QQ.
	\end{equation*}
	Moreover, elementary algebra yields that 
	\begin{align*}
	    \#\TT_{\ell'}\le \#\TT_0 (\#\TT_{\ell'}-\#\TT_0+1)
	    \quad\text{for all }(\ell',0)\in\QQ_0.
	\end{align*}
	For $(\ell, k) \in \QQ$, Theorem~\ref{theorem:linear-convergence} and the geometric series thus show that
	\begin{align*}
		&\work(\ell, k)
		\stackrel{\eqref{eq:work}}=
		\hspace{-1em} \sum_{\substack{ (\ell',k') \in \QQ \\ |(\ell',k')| \leq |(\ell,k)| }} \hspace{-1em} \#\TT_{\ell'}
		\leq
		\#\TT_0 \, \hspace{-1em} \sum_{\substack{ (\ell',k') \in \QQ \\ |(\ell',k')| \leq |(\ell,k)| }} \hspace{-1em} (\#\TT_{\ell'} - \#\TT_0 + 1)\\
		&\qquad\leq
		\#\TT_0 C_r^{1/r} \hspace{-1em} \sum_{\substack{ (\ell',k') \in \QQ \\ |(\ell',k')| \leq |(\ell,k)| }} \hspace{-1em} \big( \Lambda_{\ell'}^{k'} \big)^{-1/r}
		\leq
		\#\TT_0 C_r^{1/r} \Clin^{1/r} \frac{1}{1-\qlin^{1/r}} \, \big( \Lambda_{\ell}^{k} \big)^{-1/r}.
	\end{align*}
	With $\Crate := (\#\TT_0)^r \Clin \, 1/(1-\qlin^{1/r})^r$, this gives that
	\begin{equation*}
		\work(\ell, k)^r \Lambda_{\ell}^{k}
		\leq
		\Crate C_r
		\quad\text{for all }
		(\ell, k) \in \QQ.
	\end{equation*}
	This shows the final inequality in~\eqref{eq:rate-equivalence} and thus concludes the proof.
\end{proof}

If $\theta$ and $\lctr$ are small enough, we are able to show that linear convergence from Theorem~\ref{theorem:linear-convergence} even guarantees optimal rates with respect to both the number of unknowns $\#\TT_\ell$ and the total cost $\work(\ell,k)$.
Given $N\in\N_0$, let $\T(N)$ be the set of all $\TT_\coarse\in\T$ with $\#\TT_\coarse-\#\TT_0\le N$.
With 
\begin{subequations}
\begin{align}
    \norm{u^\star}{\mathbb{A}_r} 
    := \sup_{N\in\N_0} (N+1)^r \min_{\TT_{\rm opt}\in\T(N)} \eta_{\rm opt}(u_{\rm opt}^\star) \in [0,\infty]  
\end{align}
and 
\begin{align}
    \norm{z^\star}{\mathbb{A}_r} 
    := \sup_{N\in\N_0} (N+1)^r \min_{\TT_{\rm opt}\in\T(N)}\zeta_{\rm opt}(z_{\rm opt}^\star) \in [0,\infty]  
\end{align}
\end{subequations}
for all $r>0$, there holds the following result.

\begin{theorem}\label{theorem:optimal-convergence}
Recall the definition of the total computational cost $\work(\ell,k)$ from~\eqref{eq:work}.
    Suppose the mesh properties~\eqref{eq:mesh-sons}--\eqref{eq:mesh-overlay} as well as the axioms \eqref{assumption:stab}--\eqref{assumption:drel}. 
    Define
    \begin{align}
        \theta_\star := \frac{1}{1+\Cstab^2\Cdrel^2}
        \quad \text{and} \quad
        \lconv := \frac{1-\qctr}{\qctr \Cstab}. 
    \end{align}
    Let both adaptivity parameters $0<\theta\le1$ and $0<\lctr<\lconv$ be sufficiently small such that 
    \begin{align}\label{eq:small-theta}
        0 < \Big(\frac{\sqrt{2\theta} + \lctr/\lconv}{1-\lctr/\lconv}\Big)^2 < \theta_\star.
    \end{align}
    Let $1 \leq \Cmark < \infty$.
    Suppose that the set of marked elements $\MM_\ell$ in Algorithm~\ref{algorithm}{\rm(iv)} is constructed by one of the strategies from Remark~\ref{rem:marking}{\rm (a)--(c)}, where the sets in~\eqref{eq:bet-marking} and~\eqref{eq:doerfler} have up to the factor $\Cmark$ minimal cardinality. 
    Let $s,t>0$ with $\norm{u^\star}{\mathbb{A}_s}+\norm{z^\star}{\mathbb{A}_t}<\infty$.
    Then, there exists a constant $\Copt>0$ such that 
    \begin{align}
    \label{eq:optimal-rates}
        \sup_{(\ell,k)\in\QQ} \work(\ell,k)^{s+t} \Lambda_\ell^k 
        \le \Copt \max\{\norm{u^\star}{\mathbb{A}_s}\norm{z^\star}{\mathbb{A}_t},\Lambda_0^0\}.
    \end{align}
    The constant $\Copt$ depends only on $\Ccls$, $\Cstab$, $\qred$, $\Crel$, $\Cdrel$, $\qctr$, $\Cmark$, $\theta$, $\lctr$, $\#\TT_0$, $s$, and $t$.
\end{theorem}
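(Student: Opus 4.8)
The plan is to reduce optimal complexity to three ingredients: (1) full linear convergence (Theorem~\ref{theorem:linear-convergence}), which via Corollary~\ref{corollary:optimal-convergence} converts rates in $\#\TT_\ell$ into rates in $\work(\ell,k)$; (2) a Dörfler-marking/optimality-of-marked-sets argument showing that the product estimator for the \emph{exact} discrete solutions contracts after refinement, in the spirit of \cite{ckns2008,axioms,fpz2014}; and (3) a perturbation argument controlling the difference between inexact iterates $u_\ell^{\m},z_\ell^{\n}$ and the exact Galerkin solutions $u_\ell^\star,z_\ell^\star$ using the stopping criterion~\eqref{eq:stopping} together with~\eqref{eq:contraction}. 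Because of Corollary~\ref{corollary:optimal-convergence}, it suffices to prove $\sup_{(\ell,k)\in\QQ}(\#\TT_\ell)^{s+t}\Lambda_\ell^k\lesssim\max\{\norm{u^\star}{\mathbb A_s}\norm{z^\star}{\mathbb A_t},\Lambda_0^0\}$; and since $\Lambda_\ell^k\lesssim\Lambda_\ell^{\k(\ell)}\simeq\eta_\ell(u_\ell^{\m})\zeta_\ell(z_\ell^{\n})$ for all $k\le\k(\ell)$ by linear convergence and~\eqref{eq:stopping}, we only need to bound $(\#\TT_\ell)^{s+t}\eta_\ell(u_\ell^{\m})\zeta_\ell(z_\ell^{\n})$ uniformly in $\ell$.

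First I would establish a \emph{comparison lemma}: for the optimal meshes $\TT_{\rm opt}^u\in\T(N_u)$, $\TT_{\rm opt}^z\in\T(N_z)$ realizing $\norm{u^\star}{\mathbb A_s}$ and $\norm{z^\star}{\mathbb A_t}$ at prescribed tolerances, their overlay $\widehat\TT:=\TT_\ell\oplus\TT_{\rm opt}^u\oplus\TT_{\rm opt}^z$ satisfies, via discrete reliability~\eqref{assumption:drel} and the choice $\theta<\theta_\star$, a Dörfler property: the refined set $\RR_\ell:=\TT_\ell\setminus\widehat\TT$ satisfies $\vartheta\,\eta_\ell(u_\ell^\star)\le\eta_\ell(\RR_\ell,u_\ell^\star)$ and similarly for $\zeta$, with $\vartheta$ close to $1$. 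The subtle point — and this is where condition~\eqref{eq:small-theta} enters, with $\lconv=(1-\qctr)/(\qctr\Cstab)$ — is that the algorithm marks based on the \emph{inexact} quantities $\eta_\ell(u_\ell^{\m}),\zeta_\ell(z_\ell^{\n})$, so I must transfer the Dörfler estimate from $u_\ell^\star$ to $u_\ell^{\m}$ using stability~\eqref{assumption:stab} and $\enorm{u_\ell^\star-u_\ell^{\m}}\le\tfrac{\qctr}{1-\qctr}\enorm{u_\ell^{\m}-u_\ell^{\m-1}}\le\tfrac{\qctr\lctr}{1-\qctr}\eta_\ell(u_\ell^{\m})=\tfrac{\lctr}{\lconv\Cstab}\,\Cstab\,\eta_\ell(u_\ell^{\m})$: this is exactly why the perturbed Dörfler parameter $(\sqrt{2\theta}+\lctr/\lconv)/(1-\lctr/\lconv)$ must stay below $\sqrt{\theta_\star}$. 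Hence the set $\MM_\ell$ chosen by the algorithm (up to factor $\Cmark$) has $\#\MM_\ell\lesssim\#\RR_\ell\lesssim N_u+N_z$, and then the mesh-closure estimate~\eqref{eq:mesh-closure} bounds $\#\TT_\ell-\#\TT_0\lesssim\sum_{j<\ell}\#\MM_j$.

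The second main step is to combine this with linear convergence to sum the series. Given target tolerances, choose for each $\ell$ the optimal meshes so that $N_u\simeq(\eta_\ell(u_\ell^\star))^{-1/s}\norm{u^\star}{\mathbb A_s}^{1/s}$ and $N_z\simeq(\zeta_\ell(z_\ell^\star))^{-1/t}\norm{z^\star}{\mathbb A_t}^{1/t}$; using $\#\MM_j\lesssim N_u(j)+N_z(j)$ together with the product structure and a Young-type inequality to split $s+t$, plus full linear convergence $\Lambda_j^{\k}\lesssim\qlin^{|(\ell,\k(\ell))|-|(j,\k(j))|}\Lambda_\ell^{\k}$, one gets $\#\TT_\ell-\#\TT_0\lesssim\sum_{j<\ell}(\Lambda_j^{\k})^{-1/(s+t)}\max\{\norm{u^\star}{\mathbb A_s}\norm{z^\star}{\mathbb A_t},\Lambda_0^0\}^{1/(s+t)}$, and the geometric series in $\qlin^{1/(s+t)}$ yields $(\#\TT_\ell)^{s+t}\Lambda_\ell^{\k}\lesssim\max\{\norm{u^\star}{\mathbb A_s}\norm{z^\star}{\mathbb A_t},\Lambda_0^0\}$. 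Finally Corollary~\ref{corollary:optimal-convergence} upgrades this to the claimed bound in $\work(\ell,k)^{s+t}$.

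I expect the \textbf{main obstacle} to be the product/multiplicative structure: unlike standard AFEM one cannot simply run a single quasi-error through the optimality machinery, because the marked set couples the primal and dual estimators nonlinearly (see~\eqref{eq:abstract-marking}), and the two problems have \emph{different} rates $s$ and $t$. Handling this requires carefully splitting the tolerance budget between the primal and dual optimal meshes and invoking a weighted Young inequality to recover the combined rate $s+t$ — together with verifying that, despite the inexact solve, the perturbed product-Dörfler condition still lies in the admissible regime dictated by~\eqref{eq:small-theta}. This is the technically new part compared to \cite{banach,banach2} and to the exact-solver GOAFEM analyses of \cite{fpz2014,ms2009}.
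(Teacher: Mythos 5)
Your high-level structure matches the paper's proof almost exactly: reduce via Corollary~\ref{corollary:optimal-convergence} to a rate bound in $\#\TT_\ell$, use a comparison argument to extract a good refinement, apply the optimality-of-D\"orfler argument together with discrete reliability, transfer from $u_\ell^\star,z_\ell^\star$ to the inexact iterates via the perturbation argument encoded in $\lconv$ and condition~\eqref{eq:small-theta} (this is Lemma~\ref{lemma:banach} in the paper), and finally sum using mesh-closure and full linear convergence. The transfer step and the role of $\lconv$ are stated correctly, and you correctly identify the product structure with different rates $s,t$ as the novel difficulty.

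However, there is a concrete gap in your Step~2, precisely at the point you yourself flag as the obstacle. You write that one should ``choose for each $\ell$ the optimal meshes so that $N_u\simeq(\eta_\ell(u_\ell^\star))^{-1/s}\norm{u^\star}{\mathbb A_s}^{1/s}$ and $N_z\simeq(\zeta_\ell(z_\ell^\star))^{-1/t}\norm{z^\star}{\mathbb A_t}^{1/t}$'' and then bound $\#\MM_j\lesssim N_u(j)+N_z(j)$. With these \emph{unbalanced} tolerances (primal tolerance $\simeq\kappa\eta_\ell$, dual tolerance $\simeq\kappa\zeta_\ell$) the bound $N_u+N_z\lesssim(\eta_\ell\zeta_\ell)^{-1/(s+t)}\big(\norm{u^\star}{\mathbb A_s}\norm{z^\star}{\mathbb A_t}\big)^{1/(s+t)}$ simply does not follow; indeed Young's inequality points the \emph{wrong} way, giving $N_u+N_z\gtrsim(\eta_\ell\zeta_\ell)^{-1/(s+t)}(\cdots)^{1/(s+t)}$ rather than $\lesssim$. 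Moreover, summing $\sum_j N_u(j)$ requires geometric decay of $\eta_j^{-1/s}$ alone, but linear convergence (Theorem~\ref{theorem:linear-convergence}) only controls the \emph{product} $\Lambda_j^k$, not the primal and dual factors separately; $\eta_j$ need not contract on its own.

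The fix — and this is what the paper packages into Lemma~\ref{lemma:comparison} (the comparison lemma from~\cite{pointabem}) — is to choose the two tolerances so that the resulting cardinalities $N_u$ and $N_z$ are themselves balanced \emph{a priori}, not tied to $\eta_\ell$ and $\zeta_\ell$ individually. Concretely, one picks primal tolerance $\tau_u$ and dual tolerance $\tau_z$ with $\tau_u\tau_z\simeq\kappa\,\eta_\ell(u_\ell^\star)\zeta_\ell(z_\ell^\star)$ (so that the \emph{product} contracts, which is all that~\eqref{eq1:comparison} asks for) \emph{and simultaneously} $\tau_u^{-1/s}\norm{u^\star}{\mathbb A_s}^{1/s}\simeq\tau_z^{-1/t}\norm{z^\star}{\mathbb A_t}^{1/t}$; solving this pair gives $N_u\simeq N_z\simeq\big(\kappa^{-1}\norm{u^\star}{\mathbb A_s}\norm{z^\star}{\mathbb A_t}\big)^{1/(s+t)}\big(\eta_\ell\zeta_\ell\big)^{-1/(s+t)}$, which is exactly~\eqref{eq2:comparison}. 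Then the sum over $j$ runs over $(\Lambda_j^{\k})^{-1/(s+t)}$ only, so linear convergence of the product suffices and the geometric series closes. You hint at this balance in your closing paragraph, but the explicit tolerance formulas in Step~2 are the wrong ones, and this is precisely the step where the GOAFEM argument departs nontrivially from standard AFEM.

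One smaller remark: the paper separates the implications of~\eqref{eq1:comparison} into the ``either/or'' form~\eqref{eq:either-reduction} before invoking Lemma~\ref{lemma:optimality-doerfler}, because the D\"orfler property is only guaranteed for \emph{one} of the two problems, not both; and the marking strategies (a)--(c) are handled separately, since (a) requires the stronger bound~\eqref{eq:small-theta} (with $\sqrt{2\theta}$), while (b)--(c) allow~\eqref{eq:larger-theta} (with $\sqrt{\theta}$). Your sketch glosses over this asymmetry, but it is a bookkeeping point rather than a conceptual gap.
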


\begin{remark}
\label{rem:parameter-constraint}
	The constraint~\eqref{eq:small-theta} is enforced by our analysis of the marking strategy from Remark~\ref{rem:marking}{\rm (a)}, while the marking strategies from Remark~\ref{rem:marking}{\rm (b)--(c)} allow to relax the condition to
	\begin{equation}
	\label{eq:larger-theta}
		0 < \Big(\frac{\sqrt{\theta} + \lctr/\lconv}{1-\lctr/\lconv}\Big)^2 < \theta_\star.
	\end{equation}
\end{remark}

\subsection{Alternative termination criteria for iterative solver}\label{section:termination}
The above formulations of Algorithm~\ref{algorithm} stops the iterative solver for $u_\ell^m$ and the iterative solver for $z_\ell^n$ independently of each other as soon as the respective termination criteria in~\eqref{eq:stopping} are satisfied.
In this section, we briefly discuss two alternative termination criteria:

\textit{Stronger termination:}
The current proof of linear convergence (and of the subsequent proof of optimal convergence) does only exploit that $u_{\ell}^{\k}$ and $z_{\ell}^{\k}$ satisfy the stopping criterion and the previous iterates do not (cf.\ Lemma~\ref{lemma:banach2-new}{\rm(iii)}).
This can also be ensured by the following modification of Algorithm~\ref{algorithm}(i):
\begin{itemize}
	\item[\rm(i)] Employ the iterative solver to compute iterates $u_{\ell}^{1}, \dots, u_{\ell}^{k}$ and $z_{\ell}^{1}, \dots, z_{\ell}^{k}$ together with the corresponding refinement indicators $\eta_\ell(T,u_{\ell}^{k})$ and $\zeta_\ell(T, z_{\ell}^{k})$ for all $T \in \TT_\ell$, until 
	\begin{align}\label{eq:stopping2}
		\enorm{u_{\ell}^{k} - u_{\ell}^{ k-1}} \le \lctr \, \eta_\ell(u_{\ell}^{k})
	 	\quad \text{and} \quad
		\enorm{z_{\ell}^{k} - z_{\ell}^{ k-1}} \le \lctr \, \zeta_\ell(z_{\ell}^{k}).
	\end{align}
\end{itemize}
Note that this will lead to more solver steps, since now $k = \k(\ell)$ (if it exists) is the smallest index for which the stopping criterion holds simultaneously for both $u_{\ell}^{\k}$ and $z_{\ell}^{\k}$. 

Inspecting the proof of Lemma~\ref{lemma:banach2-new} below, we see that all results hold verbatim also for this stopping criterion.
Thus, we conclude linear and optimal convergence (in the sense of Theorem~\ref{theorem:linear-convergence} and Theorem~\ref{theorem:optimal-convergence}) also in this case.

\textit{Natural termination:} The following stopping criterion (which is somehow the most natural candidate) also leads to linear convergence:
Let $\m(\ell), \n(\ell) \in \N$ be minimal with~\eqref{eq:stopping}.
If either of them do not exist, we set again $\m(\ell) = \infty$, or $\n(\ell) = \infty$, respectively.
Define $\k(\ell) := \max\{\m(\ell), \n(\ell)\}$. Then, employ the iterative solver $\k(\ell)$ times for both the primal and the dual problem, i.e., the solver provides iterates $u_\ell^k$ and $z_\ell^k$ until both stopping criteria in~\eqref{eq:stopping} have been satisfied once (which avoids the artificial definition~\eqref{eq2:stopping}).
For instance, if $\m(\ell) < \n(\ell)=\k(\ell)<\infty$, we continue to iterate for the primal problem until $u_{\ell}^{\k}$ is obtained (or never stop the iteration if $\n(\ell)=\k(\ell) = \infty$).
If $\lctr > 0$ is sufficiently small such that $1 - \frac{\qctr}{1-\qctr} \, \Cstab \, (1+\qctr) \lctr > 0$, then we can define
\begin{equation*}
	\lctr
	\leq
	\lctr^\prime
	:=
	\max \Big\{ 1,
	\frac{(1+\qctr)\qctr}{(1-\qctr) \big(
		1 - \frac{\qctr}{1-\qctr} \, \Cstab \, (1+\qctr) \lctr
		\big)} \Big\} \, \lctr
	<
	\infty,
\end{equation*}
and we can guarantee the stopping condition~\eqref{eq:stopping} with the larger constant $\lctr'$, i.e.,
 \begin{equation}
\label{eq:lctr-prime}
	\enorm{u_{\ell}^{ \k} - u_{\ell}^{ \k-1}}
	\leq
	\lctr^\prime \, \eta_\ell(u_{\ell}^{\k})
	\quad\text{and}\quad
	\enorm{z_{\ell}^{ \k} - z_{\ell}^{ \k-1}}
	\leq
	\lctr^\prime \, \zeta_\ell(z_{\ell}^{\k});
\end{equation}
see the proof below.
Again, we notice that then the assumptions of Lemma~\ref{lemma:banach2-new} below are met.
Hence, we conclude linear convergence (in the sense of Theorem~\ref{theorem:linear-convergence}) also for this stopping criterion.
Moreover, optimal rates in the sense of Theorem~\ref{theorem:optimal-convergence} hold if $\lctr$ in~\eqref{eq:small-theta} is replaced by $\lctr'$.

\begin{proof}[Proof of~\eqref{eq:lctr-prime}]
	Without loss of generality, let us assume that $\m(\ell) < \k(\ell) = \n(\ell)<\infty$.
	First, we have that
	\begin{equation*}
		\enorm{u_\ell^\k - u_\ell^\m}
		\leq
		\enorm{u_\ell^\star - u_\ell^\k} + \enorm{u_\ell^\star - u_\ell^\m}
		\leq
		(1 + \qctr^{\k(\ell)-\m(\ell)}) \enorm{u_\ell^\star - u_\ell^\m}.
	\end{equation*}
	Then, using the fact that $u_\ell^\m$ satisfies the stopping criterion in~\eqref{eq:stopping} and stability~\eqref{assumption:stab}, we get that
	\begin{align*}
		\enorm{u_\ell^\star - u_\ell^\m}
		\stackrel{\mathclap{\eqref{eq:contraction}}}{\leq}
		\frac{\qctr}{1-\qctr} \enorm{u_\ell^\m - u_\ell^{\m-1}}
		&\stackrel{\mathclap{\eqref{eq:stopping}}}{\leq}
		\frac{\qctr \lctr}{1-\qctr} \eta_\ell(u_\ell^\m)
		\stackrel{\mathclap{\eqref{assumption:stab}}}{\leq}
		\frac{\qctr \lctr}{1-\qctr} \Big( \eta_\ell(u_\ell^\k) + \Cstab \enorm{u_\ell^\k - u_\ell^\m} \Big) \\
		&\leq
		\frac{\qctr \lctr}{1-\qctr} \Big( \eta_\ell(u_\ell^\k) + \Cstab (1 + \qctr^{\k(\ell)-\m(\ell)}) \enorm{u_\ell^\star - u_\ell^\m} \Big).
	\end{align*}
	For $\lctr < (1-\qctr) / [\Cstab \qctr (1 + \qctr^{\k(\ell)-\m(\ell)})]$ we can absorb the last term to obtain
	\begin{equation*}
		\enorm{u_\ell^\star - u_\ell^\m}
		\leq
		\frac{\qctr}{1-\qctr} \Big( 1 - \frac{\Cstab \qctr}{1-\qctr}(1 + \qctr^{\k(\ell)-\m(\ell)}) \lctr \Big)^{-1} \lctr \eta_\ell(u_\ell^\k).
	\end{equation*}
	Finally, we observe that
	\begin{equation*}
		\enorm{u_{\ell}^{ \k} - u_{\ell}^{ \k-1}}
		\leq
		(1+\qctr) \enorm{u_{\ell}^{ \star} - u_{\ell}^{ \k-1}}
		\leq
		(1+\qctr) \qctr^{\k-\m-1} \enorm{u_{\ell}^{ \star} - u_{\ell}^{ \m}}.
	\end{equation*}
	Combining the last two estimates we obtain that
	\begin{equation*}
		\enorm{u_{\ell}^{ \k} - u_{\ell}^{ \k-1}}
		\le
		\frac{(1+\qctr)\qctr^{\,\k(\ell)-\m(\ell)}}{(1-\qctr) \big(
			1 - \frac{\qctr}{1-\qctr} \, \Cstab \, (1+\qctr^{\,\k(\ell)-\m(\ell)}) \lctr
		\big)} \,
		\lctr \, \eta_\ell(u_{\ell}^{\k}).
	\end{equation*}
	Hence, \eqref{eq:lctr-prime} follows with $\qctr^{\,\k(\ell)-\m(\ell)} \leq \qctr$ and $\enorm{z_{\ell}^{ \k} - z_{\ell}^{ \k-1}} \leq	\lctr \, \zeta_\ell(z_{\ell}^{\k}) \leq \lctr^\prime \, \zeta_\ell(z_{\ell}^{\k})$.
\end{proof}


\section{Numerical examples}\label{sec:numerics}

In this section, we consider two numerical examples which solve the equation 
\begin{equation}
\label{eq:numerics-problem}
\begin{split}
	- \Delta u^\star &= f \hphantom{\phi 0} \quad \text{in } \Omega,\\
	u^\star &= 0 \hphantom{\phi f} \quad \text{on } \Gamma_D,\\
	\nabla u^\star \cdot \vec{n} &= \phi \hphantom{f 0} \quad \text{on } \Gamma_N,
\end{split}
\end{equation}
where $\phi \in L^2(\Gamma_N)$ and $\normalvec$ is the element-wise outwards facing unit normal vector.
We refer the reader to Remark~\ref{rem:neumann-boundary} for a comment on the applicability of our results to this model problem.
We further suppose that the goal functional is a slight variant of the one proposed in~\cite{ms2009}, i.e.,
\begin{equation}
\label{eq:numerical-goal}
	G(v) = - \int_{\omega} \nabla v \cdot \g \d{x}
	\qquad
	\text{for } v \in H^1_D(\Omega),
\end{equation}
with a subset $\omega \subseteq \Omega$ and a fixed direction $\g(x) = \g_0 \in \R^2$.
Moreover, for error estimation, we employ standard residual error estimators, which in our case, for all $(\ell, k) \in \QQ$ and all $T \in \TT_\ell$, read
\begin{align*}
	\eta_\ell(T, u_\ell^k)^2
	&:=
	h_T^2 \norm{\Delta u_\ell^k + f}{L^2(T)}^2
	+ h_T \norm{\jump{\nabla u_\ell^k \cdot \normalvec}}{L^2(\partial T \cap \Omega)}^2
	+ h_T \norm{\nabla u_\ell^k \cdot \normalvec - \phi}{L^2(\partial T \cap \Gamma_N)}^2,\\
	\zeta_\ell(T, z_\ell^k)^2
	&:=
	h_T^2 \norm{\div(\nabla z_\ell^k + \g)}{L^2(T)}^2
	+ h_T \norm{\jump{(\nabla z_\ell^k + \g) \cdot \normalvec}}{L^2(\partial T \cap \Omega)}^2,
\end{align*}
where $h_T = |T|^{1/2}$ is the local mesh-width and $\jump{\cdot}$ denotes the jump across interior edges.
It is well-known \cite{axioms,fpz2014} that $\eta_\ell$ and $\zeta_\ell$ satisfy the assumptions~\eqref{assumption:stab}--\eqref{assumption:drel}.
The examples are chosen to showcase the performance of the proposed GOAFEM algorithm for different types of singularities.

Throughout this section, we solve~\eqref{eq:numerics-problem} as well as the corresponding dual problem numerically using Algorithm~\ref{algorithm}, where we make the following choices:
\begin{itemize}
	\item We solve the problems on the lowest order finite element space, i.e., with polynomial degree $p=1$.
	
	\item As initial values, we use $u_0^0 = z_0^0 = 0$.
	
	\item To solve the arising linear systems, we use a preconditioned conjugate gradient (PCG) method with an optimal additive Schwarz preconditioner.
	We refer to~\cite{cnx12,schimanko} for details and, in particular, the proof that this iterative solver satisfies~\eqref{eq:solver}.
	
	\item We use the marking criterion from Remark~\ref{rem:marking}{\rm(a)} and choose $\MM_\ell$ such that it has minimal cardinality.
	
	\item Unless mentioned otherwise, we use $\vartheta = 0.5$ and $\lctr = 10^{-5}$.
\end{itemize}

\subsection{Singularity in goal functional only}\label{subsec:example1}

\begin{figure}
	\centering
	\includegraphics[width=0.3\linewidth]{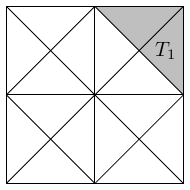}
	\qquad\qquad
	\includegraphics[width=0.3\linewidth]{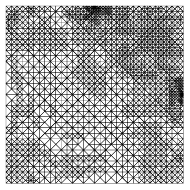}
	\caption{Left: Initial mesh $\TT_0$. The shaded area is the set $T_1$ from Section~\eqref{subsec:example1}.
		Right: Mesh after $14$ iterations of Algorithm~\ref{algorithm} with $\#\TT_{14} = 4157$.}
	\label{fig:mesh}
\end{figure}

In our first example, the primal problem is~\eqref{eq:numerics-problem} with $f = 2x_1(1-x_1) + 2x_2(1-x_2)$ on the unit square $\Omega = (0,1)^2$, and $\Gamma_D = \partial \Omega$ (and thus, $\Gamma_N = \emptyset$).
For this problem, the exact solution reads
\begin{equation*}
	u^\star(x) =x_1 x_2 (1-x_1) (1-x_2).
\end{equation*}
The goal functional is~\eqref{eq:numerical-goal} with $\omega = T_1 := \set{x \in \Omega}{x_1 + x_2 \geq 3/2}$ and $\g_0 = (-1, 0)$.
The exact goal value can be computed analytically to be 
\begin{equation*}
	G(u^\star) = \int_{T_1} \frac{\partial u^\star}{\partial x_1} \d{x}  = 11/960.
\end{equation*}
The initial mesh $\TT_0$ as well as a visualization of the set $T_1$ can be seen in Figure~\ref{fig:mesh}.

\begin{figure}
	\centering
	\includegraphics[width=0.7\linewidth]{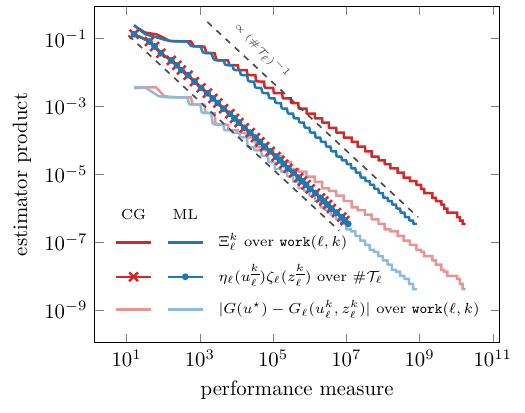}
	\caption{Comparison between iterative solvers for the problem from Section~\ref{subsec:example1}.
		A conjugate gradient method without preconditioner (CG) leads to optimal rates with respect to $\#\TT_\ell$ for the final iterates where $k = \k(\ell)$, but not with respect to $\work(\ell,k)$ for every $(\ell, k) \in \QQ$.
		Our choice of the iterative solver (ML) achieves optimal rates with respect to both measures.}
	\label{fig:cumulativecosts}
\end{figure}

For this setting, we compare our iterative solver to a conjugate gradient method without preconditioner in Figure~\ref{fig:cumulativecosts}, where we plot the computable upper bound from~\eqref{eq2:goal:aposteriori},
\begin{equation*}
	\Xi_\ell^k
	:=
	\big[ \eta_\ell(u_\ell^k) + \enorm{u_\ell^k - u_\ell^{k-1}} \big]
	\big[ \zeta_\ell(z_\ell^k) + \enorm{z_\ell^k - z_\ell^{k-1}} \big]
	\quad
	\text{for all } (\ell,k) \in \QQ,
\end{equation*}
over $\work(\ell, k)$ for all iterates $(\ell,k) \in \QQ$ and the estimator product for the final iterates $\eta_\ell(u_\ell^\k) \zeta_\ell(z_\ell^\k)$ over $\# \TT_\ell$.
We stress that, for $(\ell,k) \in \QQ$, the computable upper bound $\Xi_\ell^k$  and the quasi-error product $\Lambda_\ell^k$ from~\eqref{eq:lambda-quasi-error} are related by $\Lambda_\ell^k \lesssim \Xi_\ell^k \lesssim \Lambda_\ell^{k-1}$ so that linear convergence~\eqref{eq:linear-convergence} with optimal rates~\eqref{eq:optimal-rates} of $\Lambda_\ell^k$ also yields linear convergence with optimal rates of $\Xi_\ell^k$.
Since in our experiments $\lctr = 10^{-5}$ is small, it is plausible to assume that the final estimates on every level approximate the exact solutions sufficiently well in the sense of estimator products, i.e., $\eta_\ell(u_\ell^\k)\zeta_\ell(z_\ell^\k) \approx \eta_\ell(u_\ell^\star)\zeta_\ell(z_\ell^\star)$ (cf.\ Lemma~\ref{lemma:banach} below) for which \cite{fpz2014} proves optimal convergence rates with respect to $\#\TT_\ell$.
Indeed, we see optimal rates for $\eta_\ell(u_\ell^\k) \zeta_\ell(z_\ell^\k)$ with respect to $\#\TT_\ell$ for both solvers in Figure~\ref{fig:cumulativecosts}.
However, the non-preconditioned CG method fails to satisfy uniform contraction~\eqref{eq:solver} and thus Theorem~\ref{theorem:optimal-convergence} cannot be applied.
In fact, Figure~\ref{fig:cumulativecosts} shows that this method fails to drive down $\Xi_\ell^k$ with optimal rates with respect to $\work(\ell,k)$ (cf.~\eqref{eq:work}), as opposed to the optimally preconditioned PCG method.

\begin{figure}
	\centering
	\includegraphics[width=0.99\linewidth]{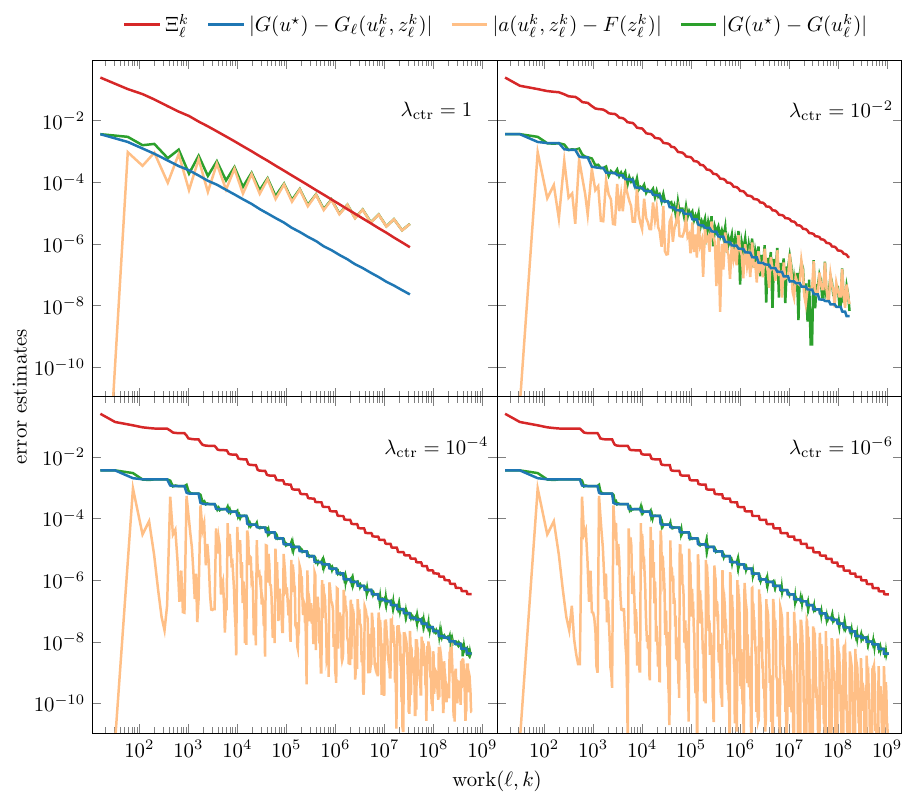}
	\caption{Comparison between $\Xi_\ell^k$, discrete goal $G_\ell(u_\ell^k, z_\ell^k)$, primal residual evaluated at the dual solution $z_\ell^k$, and direct evaluation of goal functional $G(u_\ell^k)$ for every iterate $(\ell,k) \in \QQ$ and different values of $\lctr \in \{ 1, 10^{-2}, 10^{-4}, 10^{-6} \}$.
	The primal residual evaluated at the dual solution $z_\ell^k$ is the difference between goal and discrete goal; see~\eqref{eq:def:goal}.}
	\label{fig:solverresidual}
\end{figure}

Furthermore, we plot in Figure~\ref{fig:solverresidual} different error measures over $\work(\ell,k)$ for every iterate $(\ell,k) \in \QQ$.
This shows that the corrector term
\begin{equation}
	a(u_\ell^k , z_\ell^k) - F(z_\ell^k)
\end{equation}
(which is the residual of $u_\ell^k$ evaluated at the dual solution $z_\ell^k$) in the definition of the discrete goal functional~\eqref{eq:def:goal} is indeed necessary.
We see that throughout the iteration, the goal value $G(u_\ell^k)$ highly oscillates and, for large values of $\lctr$, even shows a different rate than the $\Xi_\ell^k$ over $\work(\ell,k)$.
In general, we thus cannot expect the quantity $\Xi_\ell^k$ to bound the uncorrected goal-error $|G(u^\star) - G(u_\ell^k)|$.

For the discrete goal, the corrector term compensates the oscillations of the goal functional, such that their sum decreases with the same rate as $\Xi_\ell^k$, as predicted by~\eqref{eq2:goal:aposteriori}.
Smaller values of $\lctr$ imply that on every level $\ell$ the approximate solutions $u_\ell^k, z_\ell^k$ are computed more accurately, such that the corrector term becomes smaller and the effect on the rate of the goal value becomes negligible.

\subsection{Geometrical singularity}\label{subsec:example2}

Our second example is the classical example of a geometric singularity on the so-called Z-shape $\Omega = (-1,1)^2 \setminus \mathrm{conv}\{(-1,-1), (0,0), (-1, 0)\}$, where $\Gamma_D$ is only the re-entrant corner (cf.\ Figure~\ref{fig:geometry-mesh}).
The primal problem is~\eqref{eq:numerics-problem} with $f=0$ and $\phi = \nabla u^\star \cdot \vec{n}$, where the exact solution in polar coordinates $r(x)$ and $\varphi(x)$ of $x \in \R^2$ is prescribed as
\begin{equation*}
	u^\star(x)
	=
	r(x)^{4/7} \sin(\tfrac{4}{7} \varphi(x) + \tfrac{3\pi}{7}).
\end{equation*}
The goal functional is~\eqref{eq:numerical-goal} with $\omega = T_2 := (0.5, 0.5)^2 \cap \Omega$ and $\g_0 = (-1,-1)$ and can be computed directly via numerical integration to be
\begin{equation*}
	G(u^\star)
	=
	\int_{T_2} \Big( \frac{\partial u^\star}{\partial x_1} + \frac{\partial u^\star}{\partial x_2} \Big) \d{x}
	\approx
	0.82962247157810.
\end{equation*}
In Figure~\ref{fig:geometry-mesh}, the initial triangulation $\TT_0$ as well as the mesh after several iterations of Algorithm~\ref{algorithm} can be seen.
The adaptive algorithm resolves the singularity at the re-entrant corner, as well as critical points of the goal functional, which are at the corners of $T_2$.

\begin{figure}
	\centering
	\includegraphics[width=0.3\linewidth]{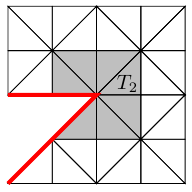}
	\qquad \qquad
	\includegraphics[width=0.3\linewidth]{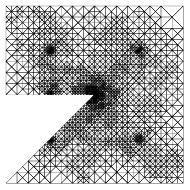}
	\caption{Left: Initial mesh $\TT_0$. The shaded area is the set $T_2$ from Section~\eqref{subsec:example2} and the Dirichlet boundary at the re-entrant corner is marked in red.
	Right: Mesh after $13$ iterations of Algorithm~\ref{algorithm} with $\#\TT_{13} = 4534$.}
	\label{fig:geometry-mesh}
\end{figure}

Figure~\ref{fig:geometry-cumulativecosts} shows the rate of the estimator product $\eta_\ell(u_\ell^\k) \zeta_\ell(z_\ell^\k)$ of the final iterates over $\#\TT_\ell$ as well as the rate of $\Xi_\ell^k$ over $\work(\ell, k)$ for all $(\ell,k) \in \QQ$.

\begin{figure}
	\centering
	\includegraphics[width=0.7\linewidth]{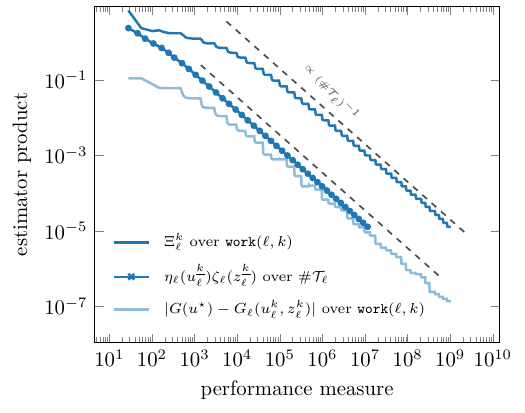}
	\caption{Rates of the estimator product for final iterates over $\#\TT_\ell$ and $\Xi_\ell^k$ as well as goal error over $\work(\ell,k)$ for all $(\ell,k) \in \QQ$.}
	\label{fig:geometry-cumulativecosts}
\end{figure}

\section{Proof of Theorem~\ref{theorem:linear-convergence}}\label{sec:proof1}

The following core lemma extends one of the key observations of~\cite{banach2} to the present setting, where we stress that the nonlinear product structure of $\Delta_\ell^k$ leads to technical challenges which go much beyond \cite{banach2}.

\begin{lemma}\label{lemma:banach2-new}
	Suppose \eqref{assumption:stab}--\eqref{assumption:rel}.
	Then, there exist constants $\mu, \Caux > 0$, and $0 < \qaux < 1$, and some scalar sequence $(R_\ell)_{\ell \in \N_0} \subset \R$ such that the quasi-error product
	\begin{equation*}
	\Delta_{\ell}^k
	:=
	\big[ \, \enorm{u_{\ell}^{\star} - u_{\ell}^{k}} + \mu \, \eta_\ell(u_{\ell}^{k})  \, \big]
	\big[ \, \enorm{z_
		{\ell}^{\star} - z_{\ell}^{k}} + \mu \, \zeta_\ell(z_{\ell}^{k}) \, \big]
	\quad\text{for all } (\ell,k) \in \QQ_0
	\end{equation*}
	satisfies the following statements~{\rm(i)}--{\rm(v)}:
	\begin{itemize}
		\item[\rm(i)] $\Delta_\ell^k \leq \Delta_\ell^j$
		\quad for all $0 \leq j \leq k \leq \k(\ell)$.
		\item[\rm(ii)] $\Delta_\ell^{\k-1} \leq \Caux \, \Delta_\ell^\k$ \quad if $\k(\ell) < \infty$.
		\item[\rm(iii)] $\Delta_{\ell}^{k} \le \qaux \, \Delta_{\ell}^{k-1}$
		\quad for all $0 < k < \k(\ell)$.
		\item[\rm(iv)] $\Delta_{\ell+1}^{0} \le \qaux \, \Delta_{\ell}^{\k-1} + R_\ell$
		\quad for all $0 < \ell < \underline{\ell}$.
		\item[\rm(v)] $\sum_{\ell = \ell'}^{\underline{\ell}-1} R_{\ell}^2 \leq \Caux (\Delta_{\ell}^{\k-1})^2$
		\quad for all $0 \leq \ell' < \underline{\ell}-1$.
	\end{itemize}
	The constants $\mu$, $\Caux$, and $\qaux$ depend only on $\Cstab$, $\qred$, $\Crel$, and $\qctr$ as well as on the (arbitrary) adaptivity parameters $0 < \theta \le 1$ and $\lctr > 0$.
\end{lemma}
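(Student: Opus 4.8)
Write $\Delta_\ell^k=A_\ell^k\,B_\ell^k$ with the primal and dual quasi-errors $A_\ell^k:=\enorm{u_\ell^\star-u_\ell^k}+\mu\,\eta_\ell(u_\ell^k)$ and $B_\ell^k:=\enorm{z_\ell^\star-z_\ell^k}+\mu\,\zeta_\ell(z_\ell^k)$, keeping $\mu>0$ as a free small parameter to be fixed at the very end so that all estimates close; it is convenient to pass freely between these and the equivalent "squared" quasi-errors $\widehat A_\ell^k:=\enorm{u_\ell^\star-u_\ell^k}^2+\mu\,\eta_\ell(u_\ell^k)^2$, $\widehat B_\ell^k:=\enorm{z_\ell^\star-z_\ell^k}^2+\mu\,\zeta_\ell(z_\ell^k)^2$ (so $(A_\ell^k)^2\simeq\widehat A_\ell^k$ with absolute constants). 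Properties~(i)--(iii) are established factor-by-factor, reproducing the single-problem observations of~\cite{banach2}, and then multiplied. Monotonicity $A_\ell^k\le A_\ell^{k-1}$ in~(i) follows from the solver contraction~\eqref{eq:solver} and stability~\eqref{assumption:stab} once $\mu\le(1-\qctr)/[\Cstab(1+\qctr)]$. For~(ii), the stopping criterion~\eqref{eq:stopping}, \eqref{eq:contraction} and~\eqref{assumption:stab} bound $\enorm{u_\ell^\star-u_\ell^{\m-1}}$ and $\eta_\ell(u_\ell^{\m-1})$ by a multiple of $\eta_\ell(u_\ell^\m)$, hence of $A_\ell^\m$, so $A_\ell^{\m-1}\le C\,A_\ell^\m$ (with $C$ depending on $\lctr$); doing the same for $B$ and using $\k(\ell)=\max\{\m(\ell),\n(\ell)\}$ (whichever factor was just accepted carries the constant, the other is non-increasing by~(i)) gives~(ii). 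For~(iii), at every step $1\le k<\m(\ell)$ the primal stopping criterion fails, so $\eta_\ell(u_\ell^k)<\lctr^{-1}\enorm{u_\ell^k-u_\ell^{k-1}}\le\lctr^{-1}(1+\qctr)\enorm{u_\ell^\star-u_\ell^{k-1}}$, whence $A_\ell^k\le\bigl(\qctr+\mu(1+\qctr)/\lctr\bigr)A_\ell^{k-1}$, a genuine contraction for $\mu$ small; the same holds for $B$, and since $\k(\ell)=\max\{\m,\n\}$ at least one factor is still in its pre-acceptance phase at each step $k<\k(\ell)$ while the other is non-increasing, which yields~(iii).

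The substance is~(iv)--(v). Since $\ell<\underline\ell$, both $\eta_\ell(u_\ell^\m)$ and $\zeta_\ell(z_\ell^\n)$ are positive, so dividing the marking condition~\eqref{eq:abstract-marking} by $\eta_\ell(u_\ell^\m)^2\zeta_\ell(z_\ell^\n)^2$ gives $\eta_\ell(\MM_\ell,u_\ell^\m)^2/\eta_\ell(u_\ell^\m)^2+\zeta_\ell(\MM_\ell,z_\ell^\n)^2/\zeta_\ell(z_\ell^\n)^2\ge2\theta$, so at least one summand is $\ge\theta$: a D\"orfler estimate with parameter $\theta$ holds for the primal estimator (w.r.t.\ $u_\ell^\m$) or for the dual one (w.r.t.\ $z_\ell^\n$). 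In the "primal" case, $\MM_\ell\subseteq\TT_\ell\setminus\TT_{\ell+1}$ together with~\eqref{assumption:stab} (which forces $\eta_{\ell+1}(\UU,u_\ell^\m)=\eta_\ell(\UU,u_\ell^\m)$ on unrefined elements) and reduction~\eqref{assumption:red} give $\eta_{\ell+1}(u_\ell^\m)^2\le q_\eta^2\,\eta_\ell(u_\ell^\m)^2$ with $q_\eta^2:=1-(1-\qred^2)\theta<1$; combined with the Galerkin/Pythagoras identity $\enorm{u_{\ell+1}^\star-u_\ell^\m}^2=\enorm{u_{\ell+1}^\star-u_\ell^\star}^2+\enorm{u_\ell^\star-u_\ell^\m}^2$ on the nested spaces $\XX_\ell\subseteq\XX_{\ell+1}$, the solver contraction, and~\eqref{assumption:stab} to pass from $u_\ell^\m$ back to $u_\ell^{\m-1}$, this produces (for $\delta$ then $\mu$ small) a perturbed contraction $\widehat A_{\ell+1}^0\le q_u^2\,\widehat A_\ell^{\m-1}+\enorm{u_{\ell+1}^\star-u_\ell^\star}^2$ with $q_u<1$ — the adaptation of the AFEM-with-contractive-solver estimate of~\cite{banach2}. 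The dual factor need not contract, but (crucially for the product to remain contractive) the non-growth bound must have leading constant exactly $1$: Pythagoras for the dual together with the estimator non-increase $\zeta_{\ell+1}(z_\ell^\n)\le\zeta_\ell(z_\ell^\n)$ give $\widehat B_{\ell+1}^0\le\widehat B_\ell^\n+\enorm{z_{\ell+1}^\star-z_\ell^\star}^2$. Multiplying the two bounds, using~(i)--(ii) to compare $\widehat A_\ell^{\m-1}\widehat B_\ell^\n$ with $(\Delta_\ell^{\k-1})^2$ and absorbing the resulting (possibly $\lctr$-dependent) comparison constants, one collects the remaining mixed terms $\widehat A_\ell^{\m-1}\enorm{z_{\ell+1}^\star-z_\ell^\star}^2$, $\enorm{u_{\ell+1}^\star-u_\ell^\star}^2\,\widehat B_\ell^\n$, $\enorm{u_{\ell+1}^\star-u_\ell^\star}^2\enorm{z_{\ell+1}^\star-z_\ell^\star}^2$ into $R_\ell^2$, which yields~(iv); the "dual" case is symmetric.

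Finally, (v) rests on the telescoping of the level-increment energies: by Galerkin orthogonality on the nested spaces, $\sum_{\ell=\ell'}^{\underline\ell-1}\enorm{u_{\ell+1}^\star-u_\ell^\star}^2\le\enorm{u^\star-u_{\ell'}^\star}^2$ and likewise for $z$, and reliability~\eqref{assumption:rel} with~\eqref{assumption:stab} bound $\enorm{u^\star-u_{\ell'}^\star}^2\lesssim(A_{\ell'}^{\k-1})^2$ and $\enorm{z^\star-z_{\ell'}^\star}^2\lesssim(B_{\ell'}^{\k-1})^2$. The quasi-error quantities multiplying these telescoping sums in $R_\ell^2$ are controlled uniformly over $\ell\ge\ell'$ by the already-established~(i)--(iv): these give $\Delta_\ell^{\k-1}\lesssim\Delta_{\ell'}^{\k-1}+\sum_{j=\ell'}^{\ell-1}R_j$, and substituting this back into the estimate for $\sum_{\ell}R_\ell^2$ and resolving the resulting inequality (a geometric-series/Young-type bootstrap) yields $\sum_{\ell=\ell'}^{\underline\ell-1}R_\ell^2\le\Caux\,(\Delta_{\ell'}^{\k-1})^2$.

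I expect the main obstacle to be precisely~(iv)--(v). The nonlinear product structure means that on a given level only one of the two D\"orfler estimates is available, so one factor is contracted while the other is merely prevented from growing; the delicate points are that the non-growth bound for the passive factor must come with leading constant exactly $1$, that the solver-error and estimator-transfer contributions generated there (which carry $\lctr$-dependent constants) must be routed into the remainder $R_\ell$ rather than into the contraction term, and that $R_\ell$ must nonetheless be shown square-summable via the telescoping energy identities. Keeping track of the correct iterate — $\widehat A_\ell^{\m-1}$ versus $\widehat A_\ell^{\m}$, together with the bookkeeping induced by $\k(\ell)=\max\{\m(\ell),\n(\ell)\}$ and the frozen iterates~\eqref{eq2:stopping} — is what makes the argument, including the bootstrap in~(v), work for arbitrary adaptivity parameters $0<\theta\le1$ and $\lctr>0$.
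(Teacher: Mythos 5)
Your plan for items (i)--(iii) reproduces the paper's argument factor by factor, and your case-split analysis of (iv) — contract whichever factor received a genuine D\"orfler estimate, merely prevent the other from growing — is a valid alternative to the paper's route. (The paper avoids the case split entirely: it applies the Young inequality to the product of the two squared estimator ratios, and bounds the resulting bracket by zero directly from the product marking criterion~\eqref{eq:abstract-marking}, which yields $\eta_{\ell+1}(u_{\ell+1}^0)\zeta_{\ell+1}(z_{\ell+1}^0)\le q_\theta\,\eta_\ell(u_\ell^\k)\zeta_\ell(z_\ell^\k)$ in one stroke; it then expands $\Delta_{\ell+1}^0 = \alpha\beta + \mu[\eta\beta+\zeta\alpha] + \mu^2\eta\zeta$ and contracts each group.)

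However, the execution of (iii)--(iv) has a genuine gap: the claim that $(A_\ell^k)^2\simeq\widehat A_\ell^k:=\enorm{u_\ell^\star-u_\ell^k}^2+\mu\,\eta_\ell(u_\ell^k)^2$ \emph{with absolute constants} is false. Writing $\alpha=\enorm{u_\ell^\star-u_\ell^k}$ and $\eta=\eta_\ell(u_\ell^k)$, one only has $\mu\,\widehat A_\ell^k\le (A_\ell^k)^2\le 2\,\widehat A_\ell^k$ for $\mu\le1$ (take $\alpha=0$ to see the lower bound is sharp). Consequently, if you establish a contraction $\widehat A_{\ell+1}^0\widehat B_{\ell+1}^0\le q\,\widehat A_\ell^{\m-1}\widehat B_\ell^\n+\tilde R_\ell$ with $q<1$, translating back gives only
$(\Delta_{\ell+1}^0)^2\le 4\,\widehat A_{\ell+1}^0\widehat B_{\ell+1}^0\le 4q\,\widehat A_\ell^{\m-1}\widehat B_\ell^\n+4\tilde R_\ell\le (4q/\mu^2)\,(\Delta_\ell^{\k-1})^2+4\tilde R_\ell$,
and $4q/\mu^2\gg1$ for the small $\mu$ you need elsewhere (since $q$ involves the fixed factor $q_\theta=1-(1-\qred^2)\theta$, which cannot be made small). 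The same obstruction kills (iii). The paper sidesteps this entirely by never squaring: it works with the unsquared product $A_\ell^kB_\ell^k$, proving the contraction term by term after expanding. Your case split would in fact close if carried out on the unsquared factors: in the primal case $A_{\ell+1}^0\le q\,A_\ell^{\k-1}+x_\ell^\star$ and $B_{\ell+1}^0\le B_\ell^{\k-1}+y_\ell^\star$ (with leading constant exactly $1$, as you flag), using $\eta_{\ell+1}(u_\ell^\m)\le\sqrt{q_\theta}\,\eta_\ell(u_\ell^\m)$ and the unsquared stability transfer, for which no Young penalty arises. As it stands, though, the squared formulation does not deliver the lemma as stated.

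For (v) your bootstrap idea $\Delta_\ell^{\k-1}\lesssim\Delta_{\ell'}^{\k-1}+\sum_j R_j$ is plausible but left vague; the paper instead uses the quasi-monotonicity $\eta_\ell(u_\ell^\star)\lesssim\eta_{\ell'}(u_{\ell'}^\star)$ together with an explicit geometric-series recursion $(\alpha_\ell^{\k-1})^2\lesssim\sum_j(x_j^\star)^2+(\alpha_{\ell'}^{\k-1})^2$ to bound the coefficients of the telescoping energies uniformly, which is cleaner and avoids self-referential absorption.
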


For the following proofs, we define
\begin{align*}
\alpha_\ell^k
&:=
\enorm{u_\ell^\star - u_{\ell}^{k}},
&x_\ell^\star
&:=
\enorm{u_{\ell+1}^\star - u_{\ell}^{\star}},\\
\beta_\ell^k
&:=
\enorm{z_\ell^\star - z_{\ell}^{k}},
&y_\ell^\star
&:=
\enorm{z_{\ell+1}^\star -z_{\ell}^{\star}},
\end{align*}
such that the quasi-error product reads $\Delta_{\ell}^{k} = \big[ \alpha_{\ell}^{k} + \mu \, \eta_\ell(u_{\ell}^{k}) \big] \big[ \beta_{\ell}^{k} + \mu \, \zeta_\ell(z_{\ell}^{k}) \big]$ with a free parameter $\mu > 0$ which will be fixed below.

\begin{proof}[Proof of Lemma~\ref{lemma:banach2-new}\rm(i)]
	Recall from~\eqref{eq2:stopping} that $u_\ell^k = u_\ell^\m$ for all $\m(\ell) < k \leq \k(\ell)$.
		Thus, we have that
		\begin{equation*}
		\alpha_\ell^k + \mu \, \eta_\ell(u_{\ell}^{k})
		=
		\alpha_\ell^\m + \mu \, \eta_\ell(u_{\ell}^{\m})
		\quad \text{for all }
		\m(\ell) < k \leq \k(\ell).
		\end{equation*}
	For $0 < k < \m(\ell)$, on the other hand, the solution $u_\ell^k$ is obtained by one step of the iterative solver.
	From stability~\eqref{assumption:stab} and solver contraction~\eqref{eq:solver}, we have for all $0 \leq j < k \leq \m(\ell)$ that
	\begin{align*}
	\alpha_\ell^k + \mu \, \eta_\ell(u_{\ell}^{k})
	&\stackrel{\mathclap{\eqref{assumption:stab}}}{\leq}
	\alpha_\ell^k + \mu \, \big[ \eta_\ell(u_{\ell}^{j}) + \Cstab \enorm{u_{\ell}^{k} - u_{\ell}^{j}} \big]\\
	&\stackrel{\mathclap{\eqref{eq:solver}}}{\leq}
	\big( \qctr^{k-j} + \mu \Cstab (1+\qctr^{k-j}) \big) \alpha_\ell^j + \mu \, \eta_\ell(u_{\ell}^{j})
	\leq
	\big( \qctr + 2 \mu \Cstab \big) \alpha_\ell^j + \mu \, \eta_\ell(u_{\ell}^{j}).
	\end{align*}
	If $\mu$ is chosen small enough such that $\qctr + 2 \mu \Cstab \leq 1$, together with the trivial case $j = k$, the last two equations show that
		\begin{equation*}
		\alpha_\ell^k + \mu \, \eta_\ell(u_{\ell}^{k})
		\leq
		\alpha_\ell^j + \mu \, \eta_\ell(u_{\ell}^{j})
		\quad \text{for all }
		0 \leq j \leq k \leq \k(\ell).
		\end{equation*}
		The same argument shows that
	\begin{equation}
	\label{eq:beta-monotonicity}
	\beta_\ell^k + \mu \, \zeta_\ell(z_{\ell}^{k})
	\leq
	\beta_\ell^j + \mu \, \zeta_\ell(z_{\ell}^{j}).
		\quad \text{for all }
		0 \leq j \leq k \leq \k(\ell).
	\end{equation}
	Multiplication of the last two estimates shows the assertion.
\end{proof}

\begin{proof}[Proof of Lemma~\ref{lemma:banach2-new}\rm(ii)]
	Recall that for the index $\k(\ell)$ there holds~\eqref{eq:stopping}.
	From the triangle inequality, we thus get for the primal estimator that
	\begin{equation*}
	\alpha_\ell^{\k-1}
	=
	\enorm{u_\ell^\star - u_\ell^{\k-1}}
	\leq
	\enorm{u_\ell^\star - u_\ell^{\k}} + \enorm{u_\ell^\k - u_\ell^{\k-1}}
	\stackrel{\mathclap{\eqref{eq:stopping}}}{\leq}
	\alpha_\ell^{\k} + \lctr \, \eta_\ell(u_\ell^\k).
	\end{equation*}
	Furthermore, stability~\eqref{assumption:stab} leads to
	\begin{equation*}
	\eta_\ell(u_\ell^{\k-1})
	\stackrel{\mathclap{\eqref{assumption:stab}}}{\leq}
	\eta_\ell(u_\ell^\k) + \Cstab \,\enorm{u_\ell^\k - u_\ell^{\k-1}}
	\stackrel{\mathclap{\eqref{eq:stopping}}}{\leq}
	\big( 1 + \lctr \Cstab \big) \eta_\ell(u_\ell^\k).
	\end{equation*}
	Combining the last two estimates, we see that
	\begin{equation*}
	\alpha_\ell^{\k-1} + \mu \, \eta_\ell(u_\ell^{\k-1})
	\leq
	\big(  1 + \lctr (\Cstab + \mu^{-1}) \big) \,
	\big[ \, \alpha_\ell^{\k} + \mu \, \eta_\ell(u_\ell^{\k}) \, \big].
	\end{equation*}
	Together with the analogous estimate for $\beta_\ell^{\k-1} + \mu \, \zeta_\ell(z_\ell^{\k-1})$, we conclude the proof with $\Caux = \big(  1 + \lctr (\Cstab + \mu^{-1}) \big)^2$.
\end{proof}

\begin{proof}[Proof of Lemma~\ref{lemma:banach2-new}\rm(iii)]
	Without loss of generality, suppose that $\k(\ell) = \m(\ell)$ and thus $\enorm{u_{\ell}^{k} - u_{\ell}^{k-1}} > \lctr \, \eta_\ell(u_{\ell}^{k})$.
	Then, this yields that
	\begin{equation*}
	\eta_\ell(u_{\ell}^{k})
	<
	\lctr^{-1} \, \enorm{u_{\ell}^{k} - u_{\ell}^{k-1}}
	\stackrel{\mathclap{\eqref{eq:contraction}}}{\leq}
	\lctr^{-1} \, (1+\qctr) \, \alpha_{\ell}^{k-1}
	\quad\text{for all }
	0 < k < \k(\ell).
	\end{equation*}
	With contraction of the solver~\eqref{eq:solver}, this leads to
	\begin{equation*}
	\alpha_{\ell}^k + \mu \, \eta_\ell(u_{\ell}^{k})
	\leq
	\qctr \alpha_{\ell}^{k-1} + \mu \lctr^{-1} (1+\qctr) \, \alpha_{\ell}^{k-1}
	\quad\text{for all }
	0 < k < \k(\ell).
	\end{equation*}
	From~\eqref{eq:beta-monotonicity} for $\mu$ small enough, we see that $\beta_\ell^k + \mu \, \zeta_\ell(z_{\ell}^{k}) \leq \beta_\ell^{k-1} + \mu \, \zeta_\ell(z_{\ell}^{k-1})$.
	Together with the previous estimate, this shows that
	\begin{equation}
	\label{eq:Lambda-estimate1}
	\Delta_\ell^k
	\leq
	\big( \qctr + \mu \lctr^{-1} (1+\qctr) \big) \Delta_\ell^{k-1}.
	\end{equation}
	Up to the choice of $\mu$, this concludes the proof.
\end{proof}

\begin{proof}[Proof of Lemma~\ref{lemma:banach2-new}\rm(iv)]
	First, we note that $\eta_\ell(u_\ell^\k) \zeta_\ell(z_\ell^\k) \neq 0$, according to Algorithm~\ref{algorithm}(iii) and the assumption that $\ell < \underline{\ell}$.
	From reduction of the solver~\eqref{eq:solver} and nested iteration, we get that
	\begin{equation}\label{eq:alpha-beta}
	\begin{split}
	\alpha_{\ell+1}^0
	&=
	\enorm{u_{\ell+1}^{\star} - u_\ell^\k}
	\leq
	\enorm{u_{\ell+1}^{\star} - u_{\ell}^{\star}} + \qctr \, \enorm{u_\ell^\star - u_{\ell}^{\k-1}}
	=
	x_\ell^\star + \qctr \, \alpha_\ell^{\k-1},\\
	\beta_{\ell+1}^0
	&=
	\enorm{z_{\ell+1}^{\star} - z_\ell^\k}
	\leq
	\enorm{z_{\ell+1}^{\star} - z_{\ell}^{\star}} + \qctr \, \enorm{z_\ell^\star - z_{\ell}^{\k-1}}
	=
	y_\ell^\star + \qctr \, \beta_\ell^{\k-1}
	\end{split}
	\end{equation}
	and thus
	\begin{equation}\label{eq:result-1}
	\alpha_{\ell+1}^0 \beta_{\ell+1}^0
	\leq
	\qctr^2 \, \alpha_{\ell}^{\k-1} \beta_{\ell}^{\k-1} + \qctr ( \alpha_{\ell}^{\k-1} y_{\ell}^\star + \beta_{\ell}^{\k-1} x_{\ell}^\star) + x_{\ell}^\star y_{\ell}^\star.
	\end{equation}
	For the estimator terms, we have with stability~\eqref{assumption:stab} and reduction~\eqref{assumption:red} that
	\begin{align*}
	\eta_{\ell+1}(u_{\ell+1}^0)^2
	=
	\eta_{\ell+1}(u_{\ell}^\k)^2
	&=
	\eta_{\ell+1}(\TT_{\ell+1} \cap \TT_\ell, u_{\ell}^\k)^2
	+ \eta_{\ell+1}(\TT_{\ell+1} \setminus \TT_\ell, u_{\ell}^\k)^2\\
	&\leq
	\eta_{\ell}(\TT_{\ell+1} \cap \TT_\ell, u_{\ell}^\k)^2
	+ \qred^2 \, \eta_{\ell}(\TT_{\ell} \setminus \TT_{\ell+1}, u_{\ell}^\k)^2\\
	&=
	\eta_{\ell}(u_{\ell}^\k)^2
	- (1-\qred^2) \, \eta_{\ell}(\TT_{\ell} \setminus \TT_{\ell+1}, u_{\ell}^\k)^2.
	\end{align*}
	On the one hand, with $C_1 := \Cstab (1+\qred)$, this implies that
	\begin{equation}\label{eq:marking-stability}
	\eta_{\ell+1}(u_{\ell+1}^0)
	\leq
	\eta_{\ell}(u_{\ell}^\k)
	\stackrel{\mathclap{\eqref{assumption:stab}}}{\leq}
	\eta_{\ell}(u_{\ell}^{\k-1}) + \Cstab \enorm{u_{\ell}^{\k} - u_{\ell}^{\k-1}}
	\stackrel{\mathclap{\eqref{eq:contraction}}}{\leq}
	\eta_{\ell}(u_{\ell}^{\k-1}) + C_1 \, \alpha_{\ell}^{\k-1}.
	\end{equation}
	On the other hand, with $0 < q_\theta := 1-(1-\qred^2)\theta < 1$, we get that
	\begin{equation}\label{eq:marking-reduction}
	\frac{\eta_{\ell+1}(u_{\ell+1}^0)^2}{\eta_{\ell}(u_{\ell}^\k)^2}
	\leq
	q_\theta + (1-\qred^2) \Big[ \theta - \frac{\eta_{\ell}(\TT_{\ell} \setminus \TT_{\ell+1}, u_{\ell}^\k)^2}{\eta_{\ell}(u_{\ell}^\k)^2} \Big].
	\end{equation}
	Using~\eqref{eq:marking-reduction}, the corresponding estimate for the dual estimator, and the Young inequality, we obtain that
	\begin{equation*}
	\frac{\eta_{\ell+1}(u_{\ell+1}^0)}{\eta_{\ell}(u_{\ell}^\k)}
	\frac{\zeta_{\ell+1}(z_{\ell+1}^0)}{\zeta_{\ell}(z_{\ell}^\k)}
	\leq
	q_\theta
	+ \frac{(1-\qred^2)}{2} \Big[
	2\theta
	- \frac{\eta_{\ell}(\TT_{\ell} \setminus \TT_{\ell+1}, u_{\ell}^\k)^2}{\eta_{\ell}(u_{\ell}^\k)^2}
	- \frac{\zeta_{\ell}(\TT_{\ell} \setminus \TT_{\ell+1}, z_{\ell}^\k)^2}{\zeta_{\ell}(z_{\ell}^\k)^2}
	\Big].
	\end{equation*}
	The marking criterion~\eqref{eq:abstract-marking}, which is applicable due to $\ell < \underline{\ell}$, estimates the term in brackets by zero.
	Thus stability~\eqref{assumption:stab} leads to
	\begin{equation}\label{eq:result-2}
	\begin{split}
	&\eta_{\ell+1}(u_{\ell+1}^0) \zeta_{\ell+1}(z_{\ell+1}^0)
	\leq
	q_\theta \, \eta_{\ell}(u_{\ell}^\k) \zeta_{\ell}(z_{\ell}^\k)\\
	&\qquad \stackrel{\mathclap{\eqref{assumption:stab}}}{\leq}
	q_\theta \,
	\big[ \eta_{\ell}(u_{\ell}^{\k-1}) + \Cstab \enorm{u_{\ell}^{\k} - u_{\ell}^{\k-1}}\big]
	\big[ \zeta_{\ell}(z_{\ell}^{\k-1}) + \Cstab \enorm{z_{\ell}^{\k} - z_{\ell}^{\k-1}}\big]\\
	&\qquad \stackrel{\mathclap{\eqref{eq:contraction}}}{\leq}
	q_\theta \, \eta_{\ell}(u_{\ell}^{\k-1}) \zeta_{\ell}(z_{\ell}^{\k-1})
	+ q_\theta C_1 \big[
	\eta_{\ell}(u_{\ell}^{\k-1}) \beta_{\ell}^{\k-1}
	+ \zeta_{\ell}(z_{\ell}^{\k-1}) \alpha_{\ell}^{\k-1}
	\big]
	+ C_1^2 \, \alpha_{\ell}^{\k-1} \beta_{\ell}^{\k-1}.
	\end{split}
	\end{equation}
	For the mixed terms in $\Delta_{\ell+1}^0$, we have with~\eqref{eq:alpha-beta} and \eqref{eq:marking-stability} that
	\begin{equation}\label{eq:result-3}
	\begin{split}
	\eta_{\ell+1}(u_{\ell+1}^0) \beta_{\ell+1}^0
	&\leq
	\big[ \eta_{\ell}(u_{\ell}^{\k-1}) + C_1 \, \alpha_{\ell}^{\k-1} \big]
	\big[ y_\ell^\star + \qctr \, \beta_\ell^{\k-1} \big]\\
	&=
	\qctr \, \eta_{\ell}(u_{\ell}^{\k-1}) \beta_\ell^{\k-1}
	+ \eta_{\ell}(u_{\ell}^{\k-1}) y_\ell^\star
	+ C_1 \, \alpha_\ell^{\k-1} y_\ell^\star
	+ C_1 \qctr \, \alpha_\ell^{\k-1} \beta_\ell^{\k-1}.
	\end{split}
	\end{equation}
	Analogously, we see that
	\begin{equation}\label{eq:result-4}
	\zeta_{\ell+1}(z_{\ell+1}^0) \alpha_{\ell+1}^0
	\leq
	\qctr \, \zeta_{\ell}(z_{\ell}^{\k-1}) \alpha_\ell^{\k-1}
	+ \zeta_{\ell}(z_{\ell}^{\k-1}) x_\ell^\star
	+ C_1 \, \beta_\ell^{\k-1} x_\ell^\star
	+ C_1 \qctr \, \alpha_\ell^{\k-1} \beta_\ell^{\k-1}.
	\end{equation}
	Combining~\eqref{eq:result-1} and \eqref{eq:result-2}--\eqref{eq:result-4}, we get that
	\begin{align*}
	\Delta_{\ell+1}^0
	&=
	\alpha_{\ell+1}^0 \beta_{\ell+1}^0
	+ \mu \, \big[
	\eta_{\ell+1}(u_{\ell+1}^0) \beta_{\ell+1}^0
	+ \zeta_{\ell+1}(z_{\ell+1}^0) \alpha_{\ell+1}^0
	\big]
	+ \mu^2 \, \eta_{\ell+1}(u_{\ell+1}^0) \zeta_{\ell+1}(z_{\ell+1}^0)\\
	&\leq
	\qctr^2 \, \alpha_{\ell}^{\k-1} \beta_{\ell}^{\k-1}
	+ \qctr ( \alpha_{\ell}^{\k-1} y_{\ell}^\star + \beta_{\ell}^{\k-1} x_{\ell}^\star) + x_{\ell}^\star y_{\ell}^\star\\
	&\quad
	+ \mu \, \big[
	\qctr \, \eta_{\ell}(u_{\ell}^{\k-1}) \beta_\ell^{\k-1}
	+ \eta_{\ell}(u_{\ell}^{\k-1}) y_\ell^\star
	+ C_1 \, \alpha_\ell^{\k-1} y_\ell^\star
	+ C_1 \qctr \, \alpha_\ell^{\k-1} \beta_\ell^{\k-1}
	\big]\\
	&\quad
	+ \mu \, \big[
	\qctr \, \zeta_{\ell}(z_{\ell}^{\k-1}) \alpha_\ell^{\k-1}
	+ \zeta_{\ell}(z_{\ell}^{\k-1}) x_\ell^\star
	+ C_1 \, \beta_\ell^{\k-1} x_\ell^\star
	+ C_1 \qctr \, \alpha_\ell^{\k-1} \beta_\ell^{\k-1}
	\big]\\
	&\quad
	+ \mu^2 \, \big[
	q_\theta \, \eta_{\ell}(u_{\ell}^{\k-1}) \zeta_{\ell}(z_{\ell}^{\k-1})
	+ q_\theta C_1 \big(
	\eta_{\ell}(u_{\ell}^{\k-1}) \beta_{\ell}^{\k-1}
	+ \zeta_{\ell}(z_{\ell}^{\k-1}) \alpha_{\ell}^{\k-1}
	\big)
	+ C_1^2 \, \alpha_{\ell}^{\k-1} \beta_{\ell}^{\k-1}
	\big].
	\end{align*}
	Rearranging the terms, we obtain that
	\begin{equation}
	\label{eq:Lambda-estimate2}
	\begin{split}
	\Delta_{\ell+1}^0
	&\leq
	\big( \qctr^2 + 2 \mu \qctr C_1 + \mu^2 C_1^2 \big) \, \alpha_{\ell}^{\k-1} \beta_{\ell}^{\k-1}\\
	&\qquad
	+ \mu \, \big( \qctr + \mu q_\theta C_1 \big) \,
	\big[ \eta_{\ell}(u_{\ell}^{\k-1}) \beta_\ell^{\k-1} + \zeta_{\ell}(z_{\ell}^{\k-1}) \alpha_{\ell}^{\k-1} \big]\\
	&\qquad
	+ \mu^2 \, q_\theta \, \eta_{\ell}(u_{\ell}^{\k-1}) \zeta_{\ell}(z_{\ell}^{\k-1})
	+ R_\ell,
	\end{split}
	\end{equation}
	where the remainder term is defined as
	\begin{equation}
	\label{eq:remainder}
	R_\ell
	:=
	\mu \, \big[ \eta_{\ell}(u_{\ell}^{\k-1}) y_\ell^\star + \zeta_{\ell}(z_{\ell}^{\k-1}) x_\ell^\star \big]
	+ (\qctr + \mu C_1) \big[ \alpha_{\ell}^{\k-1} y_{\ell}^\star + \beta_{\ell}^{\k-1} x_{\ell}^\star \big]
	+ x_\ell^\star y_\ell^\star.
	\end{equation}
	Up to the choice of $\mu$, this concludes the proof.
\end{proof}

\begin{proof}[Proof of Lemma~\ref{lemma:banach2-new} (choosing $\mu$)]
	For Lemma~\ref{lemma:banach2-new}{\rm(i)}, we choose $\mu$ small enough such that $\qctr + 2 \mu \Cstab \leq 1$.
	From~\eqref{eq:Lambda-estimate1} and \eqref{eq:Lambda-estimate2} in the proofs of Lemma~\ref{lemma:banach2-new}{\rm(iii)--(iv)}, we see that we additionally require
	\begin{equation}
	\label{eq:mu-conditions}
	\qctr + \mu \lctr^{-1} (1+\qctr) < 1,
	\quad\quad
	\qctr^2 + 2 \mu \qctr C_1 + \mu^2 C_1^2 < 1,
	\quad\text{and}\quad
	\qctr + \mu q_\theta C_1 < 1.
	\end{equation}
	Choosing $\mu$ small enough, we satisfy all estimates.
	We define $\qaux < 1$ as the maximum of all terms in~\eqref{eq:mu-conditions} and $q_\theta$. 
\end{proof}

\begin{proof}[Proof of Lemma~\ref{lemma:banach2-new}\rm(v)]
	First, we note that from stability~\eqref{assumption:stab} it follows that 
	\begin{equation}
	\label{eq:Delta-estimator-relation}
		\eta_{\ell}(u_{\ell}^{\k-1})
		\lesssim
		\eta_{\ell}(u_{\ell}^{\star}) + \alpha_\ell^{\k-1}
		\quad\text{and}\quad
		\eta_{\ell}(u_{\ell}^{\star}) \zeta_{\ell}(z_{\ell}^{\star})
		\lesssim
		\Delta_{\ell}^{j}
		\text{ for all }
		0 \leq j \leq \k.
	\end{equation}
	Furthermore, Galerkin orthogonality and reliability~\eqref{assumption:rel} imply that, for all $n \in \N$ with $\ell'+n < \underline{\ell}$,
	\begin{equation}
	\label{eq:galerkin-y}
		\sum_{\ell=\ell'}^{\ell'+n} (y_\ell^\star)^2
		=
		\sum_{\ell=\ell'}^{\ell'+n} \enorm{z_{\ell+1}^\star - z_\ell^\star}^2\\
		=
		\enorm{z_{\ell'+n+1}^\star - z_{\ell'}^\star}^2
		\leq
		\enorm{z^\star - z_{\ell'}^\star}^2
		\stackrel{\mathclap{\eqref{assumption:rel}}}{\lesssim}
		\zeta_{\ell'}(z_{\ell'}^{\star})^2.
	\end{equation}
	With~\eqref{eq:Delta-estimator-relation} and~\eqref{eq:galerkin-y} for $n=1$, we can bound the remainder term from~\eqref{eq:remainder} by
	\begin{equation*}
	R_\ell
	\lesssim
	\eta_{\ell}(u_{\ell}^{\star}) y_\ell^\star + \zeta_{\ell}(z_{\ell}^{\star}) x_\ell^\star
	+ \alpha_\ell^{\k-1} y_\ell^\star + \beta_\ell^{\k-1} x_\ell^\star.
	\end{equation*}
	Next, let us recall from \cite[Lemma~3.6]{axioms} the quasi-monotonicity of the estimator, which follows from~\eqref{assumption:stab}--\eqref{assumption:rel} and the C\'ea lemma, i.e., for all $\ell' \leq \ell < \underline{\ell}$,
	\begin{equation}
	\label{eq:quasi-mon}
		\eta_\ell(u_\ell^\star)
		\leq
		\eta_{\ell'}(u_{\ell'}^\star) +  \Cstab \, \enorm{u_\ell^\star - u_{\ell'}^\star}
		\leq
		\eta_{\ell'}(u_{\ell'}^\star) +  \Cstab \, \enorm{u^\star - u_{\ell'}^\star}
		\lesssim
		\eta_{\ell'}(u_{\ell'}^\star).
	\end{equation}
	For $\eta_{\ell}(u_{\ell}^{\star}) y_\ell$, we get by summation for all $0 \leq j \leq \k(\ell')$ and all $n \in \N$ with $\ell'+n < \underline{\ell}$ that
	\begin{equation*}
		\sum_{\ell={\ell'}}^{{\ell'}+n} \eta_{\ell}(u_{\ell}^{\star})^2 (y_\ell^\star)^2
		\stackrel{\mathclap{\eqref{eq:quasi-mon}}}{\lesssim}
		\eta_{{\ell'}}(u_{{\ell'}}^{\star})^2 \sum_{\ell={\ell'}}^{{\ell'}+n} (y_\ell^\star)^2
		\stackrel{\mathclap{\eqref{eq:galerkin-y}}}{\lesssim}
		\eta_{{\ell'}}(u_{{\ell'}}^{\star})^2 \zeta_{{\ell'}}(z_{{\ell'}}^{\star})^2
		\stackrel{\mathclap{\eqref{eq:Delta-estimator-relation}}}{\lesssim}
		(\Delta_{{\ell'}}^j)^2.
	\end{equation*}
	Analogously, we see that
	\begin{equation}
	\label{eq:double-estimate}
		\sum_{\ell={\ell'}}^{{\ell'}+n} (x_\ell^\star)^2
		\lesssim
		\eta_{{\ell'}}(u_{{\ell'}}^{\star})^2
		\quad\text{as well as}\quad
		\sum_{\ell={\ell'}}^{{\ell'}+n} \zeta_{\ell}(z_{\ell}^{\star})^2 (x_\ell^\star)^2
		\lesssim
		(\Delta_{{\ell'}}^j)^2.
	\end{equation}
	We proceed with $\alpha_\ell^{\k-1} y_\ell^\star$. From~\eqref{eq:alpha-beta} and the Young inequality with $\delta > 0$, we see for $0 < \ell' \leq \ell < \underline{\ell}$ that
	\begin{equation*}
		(\alpha_\ell^{\k-1})^2
		\leq
		(\alpha_\ell^{0})^2
		\stackrel{\eqref{eq:alpha-beta}}{\leq}
		(1+\delta^{-1}) \, (x_{\ell-1}^\star)^2 + \qctr (1+\delta) \, (\alpha_{\ell-1}^{\k-1})^2.
	\end{equation*}
	For $\delta$ small enough such that $q_2 := \qctr(1+\delta) < 1$ and all for $0 \leq \ell \leq \ell' < \underline{\ell}$, the geometric series proves that
	\begin{equation*}
		(\alpha_{\ell}^{\k-1})^2
		\leq
		(1+\delta^{-1}) \sum_{j=\ell'}^{\ell-1} (x_{j}^\star)^2
		+ (\alpha_{\ell}^{\k-1})^2 \sum_{j=0}^{\infty} q_2^j
		\stackrel{\eqref{eq:double-estimate}}{\lesssim}
		\eta_{{\ell'}}(u_{{\ell'}}^{\star})^2 + (\alpha_{{\ell'}}^{\k-1})^2
	\end{equation*}
	and thus
	\begin{equation*}
		\sum_{\ell={\ell'}}^{{\ell'}+n} (\alpha_\ell^{\k-1})^2 (y_\ell^\star)^2
		\leq
		\big[ \eta_{{\ell'}}(u_{{\ell'}}^{\star})^2 + (\alpha_{{\ell'}}^{\k-1})^2 \big]
		\sum_{\ell={\ell'}}^{{\ell'}+n} (y_\ell^\star)^2
		\stackrel{\mathclap{\eqref{eq:galerkin-y}}}{\lesssim}
		\big[ \eta_{{\ell'}}(u_{{\ell'}}^{\star})^2 + (\alpha_{{\ell'}}^{\k-1})^2 \big]
		\zeta_{{\ell'}}(z_{{\ell'}}^{\star})^2
		\lesssim
		(\Delta_{{\ell'}}^{\k-1})^2.
	\end{equation*}
	Analogously, we see that $\sum_{\ell={\ell'}}^{{\ell'}+n} (\beta_\ell^{\k-1})^2 (x_\ell^\star)^2 \lesssim (\Delta_{{\ell'}}^{\k-1})^2$.
	Combining all estimates with
	\begin{equation*}
		R_\ell^2
		\lesssim
		\eta_{\ell}(u_{\ell}^{\star})^2 (y_\ell^\star)^2 + \zeta_{\ell}(z_{\ell}^{\star})^2 (x_\ell^\star)^2
		+ (\alpha_\ell^{\k-1})^2 (y_\ell^\star)^2 + (\beta_\ell^{\k-1})^2 (x_\ell^\star)^2,
	\end{equation*}
	we conclude the proof.
\end{proof}

With the foregoing auxiliary result, we are in the position to prove linear convergence.

\begin{proof}[Proof of Theorem~\ref{theorem:linear-convergence}]
	Let $(\ell, k) \in \QQ$.
	We recall the quasi-error products
	\begin{align*}
		\Lambda_\ell^k
		&=
		\big[ \, \enorm{u_{\ell}^{\star} - u_{\ell}^{k}} + \eta_\ell(u_{\ell}^{k})  \, \big]
		\big[ \, \enorm{z_{\ell}^{\star} - z_{\ell}^{k}} + \zeta_\ell(z_{\ell}^{k}) \, \big],\\
		\Delta_\ell^k
		&=
		\big[ \, \enorm{u_{\ell}^{\star} - u_{\ell}^{k}} + \mu \, \eta_\ell(u_{\ell}^{k})  \, \big]
		\big[ \, \enorm{z_{\ell}^{\star} - z_{\ell}^{k}} + \mu \, \zeta_\ell(z_{\ell}^{k}) \, \big] 
	\end{align*}
	from Theorem~\ref{theorem:linear-convergence} and Lemma~\ref{lemma:banach2-new}, respectively.
	Note that
	\begin{align*}
		\Lambda_\ell^k
		\leq
		\Delta_\ell^k
		\leq
		\mu^2 \, \Lambda_\ell^k
		\quad \textrm{if }
		\mu \geq 1,
		\qquad
		\Delta_\ell^k
		\leq
		\Lambda_\ell^k
		\leq
		\mu^{-2} \, \Delta_\ell^k
		\quad \textrm{if }
		\mu < 1,
	\end{align*}
	which yields the equivalence
	\begin{equation}
	\label{eq:delta-lambda}
		\min \{1, \mu^2 \} \, \Lambda_\ell^k
		\leq
		\Delta_\ell^k
		\leq
		\max \{1, \mu^2 \} \, \Lambda_\ell^k.
	\end{equation}

	We first show linear convergence of $\Delta_\ell^k$.
	By Lemma~\ref{lemma:banach2-new}{\rm(i)}, we can absorb the term $\Delta_{\ell'}^{\k}\le\Delta_{\ell'}^{\k-1}$ for all $\ell'$.
	Paying attention to the possible case $k=\k(\ell)$, this allows us to estimate
	\begin{equation*}
		\sum_{\substack{ (\ell',k') \in \QQ \\ |(\ell',k')| \geq |(\ell,k)| }}  (\Delta_{\ell'}^{k'})^2
		\lesssim
		(\Delta_\ell^k)^2 + 
		\sum_{k' = k}^{\k(\ell)-1} (\Delta_{\ell}^{k'})^2
		+ \sum_{\ell' = \ell+1}^{\underline{\ell}}  \sum_{k' = 0}^{\k(\ell')-1} (\Delta_{\ell'}^{k'})^2.
	\end{equation*}
	Lemma~\ref{lemma:banach2-new}{\rm(iii)} shows uniform reduction of the quasi-error on every level.
	This yields that
	\begin{equation*}
		\sum_{\substack{ (\ell',k') \in \QQ \\ |(\ell',k')| \geq |(\ell,k)| }} (\Delta_{\ell'}^{k'})^2
		\lesssim
		(\Delta_{\ell}^{k})^2 \sum_{k' = k}^{\k(\ell)} \qaux^{2(k'-k)}
		+ \sum_{\ell' = \ell+1}^{\underline{\ell}} (\Delta_{\ell'}^{0})^2 \sum_{k' = 0}^{\k(\ell')-1} \qaux^{2k'}
		\lesssim
		(\Delta_{\ell}^{k})^2 + \sum_{\ell' = \ell+1}^{\underline{\ell}} (\Delta_{\ell'}^{0})^2.
	\end{equation*}
	To estimate the sum over all levels, we use that, for the refinement step, Lemma~\ref{lemma:banach2-new}{\rm(iv)} shows contraction up to a remainder term.
	The Young inequality with $\delta > 0$ and Lemma~\ref{lemma:banach2-new}{\rm(i)} then prove that
	\begin{align*}
		(\Delta_{\ell'}^{0})^2
		&\leq
		\qaux^2 (1+\delta) \, (\Delta_{\ell'-1}^{\k-1})^2 + (1+\delta^{-1}) \, R_{\ell'-1}^2\\
		&\leq
		\qaux^2 (1+\delta) \, (\Delta_{\ell'-1}^{0})^2 + (1+\delta^{-1}) \, R_{\ell'-1}^2
		\quad \text{for all }
		0 < \ell' \leq \underline{\ell}.
	\end{align*}
	Choosing $\delta$ small enough such that $q := \qaux^2 (1+\delta) < 1$, we obtain from repeatedly applying the previous estimates that
	\begin{equation*}
		(\Delta_{\ell'}^{0})^2
		\leq
		q^{\ell'-\ell} \, (\Delta_{\ell}^{\k-1})^2
		+ (1+\delta^{-1}) \sum_{n = \ell}^{\ell'-1} q^{(\ell'-1)-n} \, R_{n}^2
		\quad \text{for all }
		0 \leq \ell < \ell' \leq \underline{\ell}.
	\end{equation*}
	Using this estimate and a change of summation indices, the geometric series and Lemma~\ref{lemma:banach2-new}{\rm(v)} uniformly bound the sum over all levels by
	\begin{align*}
		\sum_{\ell' = \ell+1}^{\underline{\ell}} (\Delta_{\ell'}^{0})^2
		&\lesssim
		\sum_{\ell' = \ell+1}^{\underline{\ell}}
		\Big[
		q^{\ell'-\ell} \, (\Delta_{\ell}^{\k-1})^2
		+ \sum_{n = \ell}^{\ell'-1} q^{(\ell'-1)-n} \, R_{n}^2
		\Big]\\
		&\lesssim
		(\Delta_{\ell}^{\k-1})^2
		+\sum_{n=\ell}^{\underline{\ell}-1} R_{n}^2 \sum_{i = 0}^{\infty} q^{i}
		\lesssim
		(\Delta_{\ell}^{\k-1})^2
		+\sum_{n=\ell}^{\underline{\ell}-1} R_{n}^2
		\stackrel{\rm(v)}{\lesssim}
		(\Delta_{\ell}^{\k-1})^2.
	\end{align*}
	Combining the estimates above, we obtain that
	\begin{equation*}
		\sum_{\substack{ (\ell',k') \in \QQ \\ |(\ell',k')| \geq |(\ell,k)| }} (\Delta_{\ell'}^{k'})^2
		\lesssim
		(\Delta_{\ell}^{k})^2 + \sum_{\ell' = \ell+1}^{\underline{\ell}} (\Delta_{\ell'}^{0})^2
		\lesssim
	(\Delta_{\ell}^{k})^2 + (\Delta_{\ell}^{\k-1})^2.
	\end{equation*}
	In the case $k < \k(\ell)$, Lemma~\ref{lemma:banach2-new}{\rm(i)} proves that
	\begin{equation*}
	\sum_{\substack{ (\ell',k') \in \QQ \\ |(\ell',k')| \geq |(\ell,k)| }} (\Delta_{\ell'}^{k'})^2
	\leq
		C \, (\Delta_{\ell}^{k})^2.
	\end{equation*}
	In the case $k = \k(\ell)$, this follows with Lemma~\ref{lemma:banach2-new}{\rm(ii)}.
	In either case, the constant $C > 0$ depends only on $\Caux$ and $\qaux$ from Lemma~\ref{lemma:banach2-new}.
	Basic calculus then provides the existence of $\Clin^\prime := (1+C)^{1/2} > 1$ and $0 < \qlin := (1-C^{-1})^{-1/2} < 1$ such that
	\begin{equation*}
		\Delta_{\ell'}^{k'}
		\leq
		\Clin^\prime \qlin^{|(\ell',k')| - |(\ell,k)|} \, \Delta_{\ell}^{k}
		\quad
		\text{for all } (\ell,k), (\ell',k') \in \QQ \text{ with } (\ell',k') \geq (\ell,k);
	\end{equation*}
	see~\cite[Lemma~4.9]{axioms}.
	Finally, the claim of Theorem~\ref{theorem:linear-convergence} follows from~\eqref{eq:delta-lambda} with $\Clin = \max \{ \mu^{-2}, \mu^2 \} \, \Clin^\prime$.
\end{proof}


\section{Proof of Theorem~\ref{theorem:optimal-convergence} (optimal rates)}\label{sec:proof2}

We recall the following comparison lemma from~\cite{pointabem}. While~\cite{pointabem} is concerned with point errors in boundary element computations, we stress that the proof of~\cite[Lemma~14]{pointabem} works on a completely abstract level and thus is applicable here as well.

\begin{lemma}[{{\cite[Lemma~14]{pointabem}}}]\label{lemma:comparison}
    The overlay estimate~\eqref{eq:mesh-overlay} and the axioms~\eqref{assumption:stab}--\eqref{assumption:red} and \eqref{assumption:drel} yield the existence of a constant $C_1>0$ such that, given $0<\kappa<1$, each mesh $\TT_\coarse\in\T$ admits some refinement $\TT_\fine\in\T(\TT_\coarse)$ such that for all $s,t>0$, it holds that 
    \begin{subequations}
    \begin{align}
        \label{eq1:comparison}
        \eta_\fine(u_\fine^\star)^2 \zeta_\fine(z_\fine^\star)^2 
        &\le \kappa^2 \eta_\coarse(u_\coarse^\star)^2 \zeta_\coarse(z_\coarse^\star)^2, 
        \\
        \label{eq2:comparison}
        \#\TT_\fine - \#\TT_\coarse 
        &\le 2 \big(C_1 \kappa^{-1} \norm{u^\star}{\mathbb{A}_s} \norm{z^\star}{\mathbb{A}_t}\big)^{1/(s+t)} 
        \big(\eta_\coarse(u_\coarse^\star)\zeta(z_\coarse^\star) \big)^{1/(s+t)}.
    \end{align}
    \end{subequations}
    The constant $C_1$ depends only on $\Cstab$, $\qred$, and $\Cdrel$. 
    \hfill$\square$
\end{lemma}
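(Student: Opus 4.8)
The plan is to build $\TT_\fine$ as an \emph{overlay} of $\TT_\coarse$ with cardinality-minimal meshes for the primal and the dual problem drawn from the approximation classes $\mathbb{A}_s$ and $\mathbb{A}_t$; then~\eqref{eq1:comparison} will follow from quasi-monotonicity of the estimators under refinement, while~\eqref{eq2:comparison} will follow from the overlay estimate~\eqref{eq:mesh-overlay} together with a one-line optimization of the two mesh-size parameters.

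The key preliminary step is \emph{quasi-monotonicity of the estimators at the exact discrete solutions}, with a constant depending only on $\Cstab$, $\qred$, $\Cdrel$ (and \emph{not} on $\Crel$): there is $\Cmon>0$ with $\eta_\fine(u_\fine^\star)\le\Cmon\,\eta_\coarse(u_\coarse^\star)$ and $\zeta_\fine(z_\fine^\star)\le\Cmon\,\zeta_\coarse(z_\coarse^\star)$ for all $\TT_\fine\in\T(\TT_\coarse)$. I would prove this by splitting $\TT_\fine$ into $\TT_\fine\cap\TT_\coarse$ and $\TT_\fine\setminus\TT_\coarse$: on the first part, stability~\eqref{assumption:stab} (applied across the two meshes) gives $\eta_\fine(\TT_\fine\cap\TT_\coarse,u_\fine^\star)\le\eta_\coarse(u_\coarse^\star)+\Cstab\,\enorm{u_\fine^\star-u_\coarse^\star}$; on the second part, the same-mesh instance of~\eqref{assumption:stab} (taking $\TT_\fine$ as its own refinement) followed by reduction~\eqref{assumption:red} applied to $u_\coarse^\star\in\XX_\coarse$ gives $\eta_\fine(\TT_\fine\setminus\TT_\coarse,u_\fine^\star)\le\qred\,\eta_\coarse(u_\coarse^\star)+\Cstab\,\enorm{u_\fine^\star-u_\coarse^\star}$. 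Hence $\eta_\fine(u_\fine^\star)\le(1+\qred)\,\eta_\coarse(u_\coarse^\star)+2\Cstab\,\enorm{u_\fine^\star-u_\coarse^\star}$, and discrete reliability~\eqref{assumption:drel}, i.e.\ $\enorm{u_\fine^\star-u_\coarse^\star}\le\Cdrel\,\eta_\coarse(\TT_\coarse\setminus\TT_\fine,u_\coarse^\star)\le\Cdrel\,\eta_\coarse(u_\coarse^\star)$, absorbs the last term; the bound for $\zeta$ is identical. (Using~\eqref{assumption:rel} as in~\eqref{eq:quasi-mon} would be shorter but would make the constant depend on $\Crel$.)

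Then, given $0<\kappa<1$ and $s,t>0$ with $\norm{u^\star}{\mathbb{A}_s},\norm{z^\star}{\mathbb{A}_t}<\infty$, I would set $M:=\Cmon^2\kappa^{-1}\norm{u^\star}{\mathbb{A}_s}\norm{z^\star}{\mathbb{A}_t}/\big(\eta_\coarse(u_\coarse^\star)\zeta_\coarse(z_\coarse^\star)\big)$, the degenerate case $\eta_\coarse(u_\coarse^\star)\zeta_\coarse(z_\coarse^\star)=0$ being settled by $\TT_\fine:=\TT_\coarse$. Applying the quasi-monotonicity above to $\TT_\coarse\in\T(\TT_0)$ and the $N'=0$ term in the definition of the classes gives $\eta_\coarse(u_\coarse^\star)\zeta_\coarse(z_\coarse^\star)\le\Cmon^2\norm{u^\star}{\mathbb{A}_s}\norm{z^\star}{\mathbb{A}_t}$, hence $M\ge\kappa^{-1}>1$. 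Put $N:=\lceil M^{1/(s+t)}\rceil-1\ge1$ and, by definition of the approximation classes, pick $\TT_{\rm opt}^u,\TT_{\rm opt}^z\in\T(N)$ whose estimators at their respective exact discrete solutions are bounded by $(N+1)^{-s}\norm{u^\star}{\mathbb{A}_s}$ and $(N+1)^{-t}\norm{z^\star}{\mathbb{A}_t}$. Define $\TT_\fine:=\TT_\coarse\oplus\TT_{\rm opt}^u\oplus\TT_{\rm opt}^z$. Two applications of~\eqref{eq:mesh-overlay} give $\#\TT_\fine-\#\TT_\coarse\le(\#\TT_{\rm opt}^u-\#\TT_0)+(\#\TT_{\rm opt}^z-\#\TT_0)\le2N<2M^{1/(s+t)}$, which is~\eqref{eq2:comparison} with $C_1:=\Cmon^2$. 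Since $\TT_\fine$ refines both $\TT_{\rm opt}^u$ and $\TT_{\rm opt}^z$, quasi-monotonicity yields $\eta_\fine(u_\fine^\star)\zeta_\fine(z_\fine^\star)\le\Cmon^2(N+1)^{-(s+t)}\norm{u^\star}{\mathbb{A}_s}\norm{z^\star}{\mathbb{A}_t}\le\Cmon^2M^{-1}\norm{u^\star}{\mathbb{A}_s}\norm{z^\star}{\mathbb{A}_t}=\kappa\,\eta_\coarse(u_\coarse^\star)\zeta_\coarse(z_\coarse^\star)$, which is~\eqref{eq1:comparison} after squaring.

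I expect the main obstacle to be the quasi-monotonicity step with the \emph{correct constant dependence}: one must resist the short reliability-based argument and instead chain stability (once across meshes, once on the same mesh), reduction (only for the coarse-mesh Galerkin function), and discrete reliability, otherwise $C_1$ — and hence $\Copt$ in Theorem~\ref{theorem:optimal-convergence} — inherits a dependence on $\Crel$. The remaining points are routine: the rounding in the choice of $N$ (where the crude symmetric split $N_u=N_z=N$ already produces the factor $2$, an asymmetric split being sharper but unnecessary) and the degenerate case. Finally, if one insists on a single $\TT_\fine$ valid simultaneously for all admissible exponents $(s,t)$ rather than for a fixed pair, one instead takes $\TT_{\rm opt}^u$ and $\TT_{\rm opt}^z$ to be cardinality-minimal refinements realizing $\eta_{\TT_{\rm opt}^u}(u_{\TT_{\rm opt}^u}^\star)\le\sqrt{\kappa}\,\eta_\coarse(u_\coarse^\star)/\Cmon$ and $\zeta_{\TT_{\rm opt}^z}(z_{\TT_{\rm opt}^z}^\star)\le\sqrt{\kappa}\,\zeta_\coarse(z_\coarse^\star)/\Cmon$, whose excess cardinalities can then be bounded through every admissible exponent at once — at the cost of replacing the $(s+t)$-th root in~\eqref{eq2:comparison} by a sum of an $s$-th and a $t$-th root.
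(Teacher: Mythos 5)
Your construction is correct and is the standard route: build $\TT_\fine$ as the overlay $\TT_\coarse\oplus\TT_{\rm opt}^u\oplus\TT_{\rm opt}^z$ of $\TT_\coarse$ with cardinality-minimal members of the approximation classes, invoke quasi-monotonicity of the estimator at Galerkin solutions to control $\eta_\fine(u_\fine^\star)\zeta_\fine(z_\fine^\star)$, and read off the cardinality bound from the overlay estimate. In particular, the chain stability/reduction/discrete-reliability for quasi-monotonicity is the right one: it gives $\Cmon=1+\qred+2\Cstab\Cdrel$, which (with $C_1=\Cmon^2$) yields the stated dependence of $C_1$ on $\Cstab,\qred,\Cdrel$ only — using~\eqref{eq:quasi-mon} instead would smuggle in $\Crel$, exactly as you say. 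Your computations for $M\ge\kappa^{-1}>1$, the rounding $N=\lceil M^{1/(s+t)}\rceil-1$ (note $N\ge0$ already suffices), the two overlay applications, and the squaring of the product bound are all correct. One small reading note: the displayed~\eqref{eq2:comparison} has a sign error in the paper — the factor must be $\bigl(\eta_\coarse(u_\coarse^\star)\zeta_\coarse(z_\coarse^\star)\bigr)^{-1/(s+t)}$, as is clear from the way it is used in Step~1 of the proof of Theorem~\ref{theorem:optimal-convergence} and as your $M$ correctly encodes.

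On the quantifier in the statement: as you point out, your main construction (through the symmetric choice $N_u=N_z=N$) makes $\TT_\fine$ depend on $(s,t)$, whereas a literal reading of the lemma asks for one $\TT_\fine$ serving all $(s,t)$. For the downstream use in Theorem~\ref{theorem:optimal-convergence} only the $(s,t)$-dependent version is needed, since $s$ and $t$ are fixed there before the lemma is invoked, so your argument fully supports the paper's usage. Your closing alternative — picking $\TT_{\rm opt}^u$, $\TT_{\rm opt}^z$ minimal with $\sqrt\kappa$-reduction of each estimator separately to obtain an $(s,t)$-independent mesh — does yield an $(s,t)$-independent $\TT_\fine$, but be aware that the resulting sum-of-roots cardinality bound is genuinely weaker than~\eqref{eq2:comparison}: one cannot dominate $p^{1/s}X^{1/s}+p^{1/t}Y^{1/t}$ by a constant times $(XY)^{1/(s+t)}$ in general (already $s=t=1$ reduces it to $X+Y\lesssim\sqrt{XY}$). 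And the weaker bound would not feed into the closure estimate in Step~2 of Theorem~\ref{theorem:optimal-convergence}'s proof, because linear convergence there controls the \emph{product} $\Lambda_\ell^k$, not the individual factors $\eta_\ell,\zeta_\ell$; summing the two root terms separately therefore does not telescope. So the $(s,t)$-dependent version you prove is the one that is both achievable and actually usable, and it is the right reading of the cited lemma.
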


Note that \eqref{eq1:comparison} immediately implies that 
\begin{align}\label{eq:either-reduction}
    \eta_\fine(u_\fine)^2 \le \kappa \eta_\coarse(u_\coarse^\star)^2
    \quad\text{or}\quad
    \zeta_\fine(z_\fine^\star)^2 \le \kappa \zeta_\coarse(z_\coarse^\star)^2.
\end{align}
We will employ this lemma in combination with the so-called optimality of D\"orfler marking from \cite{axioms}.

\begin{lemma}[{{\cite[Proposition~4.12]{axioms}}}]\label{lemma:optimality-doerfler}
    Under \eqref{assumption:stab} and \eqref{assumption:drel}, for all $0<\Theta'<1/(1+\Cstab^2\Cdrel^2)$, there exists $0<\kappa_{\Theta'}<1$ such that for all $\TT_\coarse\in\T$ and all $\TT_\fine\in\T(\TT_\coarse)$, \eqref{eq:either-reduction} with $\kappa=\kappa_{\Theta'}$ implies that 
    \begin{align}\label{eq:optimal-doerfler}
        \Theta' \eta_\coarse(u_\coarse^\star)^2 \le \eta_\coarse(\TT_\coarse\setminus\TT_\fine,u_\coarse^\star)^2
        \quad\text{or}\quad
        \Theta' \zeta_\coarse(z_\coarse^\star)^2 \le \zeta_\coarse(\TT_\coarse\setminus\TT_\fine,z_\coarse^\star)^2. 
    \end{align}
    The constant $\kappa_{\Theta'}$ depends only on $\Cstab$, $\Cdrel$, and $\Theta'$.
    \hfill$\square$    
\end{lemma}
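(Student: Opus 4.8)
The plan is to follow the classical ``optimality of D\"orfler marking'' argument of \cite{ckns2008} (recast in the axioms-of-adaptivity framework as in \cite[Proposition~4.12]{axioms}), exploiting the symmetry of the two alternatives. I would assume without loss of generality that the primal alternative in \eqref{eq:either-reduction} holds, i.e., $\eta_\fine(u_\fine^\star)^2\le\kappa\,\eta_\coarse(u_\coarse^\star)^2$, and derive the first alternative in \eqref{eq:optimal-doerfler}; the dual case then follows verbatim upon replacing $(\eta,u)$ by $(\zeta,z)$. The first step is to split $\TT_\coarse$ into the kept elements $\UU:=\TT_\coarse\cap\TT_\fine$ and the refined elements $\TT_\coarse\setminus\TT_\fine$, so that
\[
	\eta_\coarse(u_\coarse^\star)^2 = \eta_\coarse(\UU,u_\coarse^\star)^2 + \eta_\coarse(\TT_\coarse\setminus\TT_\fine,u_\coarse^\star)^2;
\]
the second summand is exactly the quantity on the right-hand side of \eqref{eq:optimal-doerfler}, so it remains to absorb the first one.

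Next I would estimate the kept-element contribution using stability \eqref{assumption:stab} on $\UU\subseteq\TT_\fine\cap\TT_\coarse$ (with $v_\fine=u_\fine^\star$ and $v_\coarse=u_\coarse^\star$), namely $\eta_\coarse(\UU,u_\coarse^\star)\le\eta_\fine(\UU,u_\fine^\star)+\Cstab\,\enorm{u_\fine^\star-u_\coarse^\star}$. Squaring and applying Young's inequality with a free parameter $\delta>0$ gives
\[
	\eta_\coarse(\UU,u_\coarse^\star)^2 \le (1+\delta)\,\eta_\fine(\UU,u_\fine^\star)^2 + (1+\delta^{-1})\,\Cstab^2\,\enorm{u_\fine^\star-u_\coarse^\star}^2.
\]
Then $\eta_\fine(\UU,u_\fine^\star)^2\le\eta_\fine(u_\fine^\star)^2\le\kappa\,\eta_\coarse(u_\coarse^\star)^2$ by monotonicity of the estimator sum and the assumed reduction, and $\enorm{u_\fine^\star-u_\coarse^\star}^2\le\Cdrel^2\,\eta_\coarse(\TT_\coarse\setminus\TT_\fine,u_\coarse^\star)^2$ by discrete reliability \eqref{assumption:drel}. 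Substituting back into the splitting and rearranging (which is legitimate once $(1+\delta)\kappa<1$), one arrives at
\[
	\frac{1-(1+\delta)\kappa}{1+(1+\delta^{-1})\Cstab^2\Cdrel^2}\,\eta_\coarse(u_\coarse^\star)^2 \le \eta_\coarse(\TT_\coarse\setminus\TT_\fine,u_\coarse^\star)^2.
\]

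The last step is to pick $\delta$ and $\kappa$ so that the prefactor is at least $\Theta'$, and here the order of the choices is essential. Given $\Theta'<1/(1+\Cstab^2\Cdrel^2)$, I would first fix $\delta>0$ large enough that $1/(1+(1+\delta^{-1})\Cstab^2\Cdrel^2)>\Theta'$ --- possible since this quantity increases to $1/(1+\Cstab^2\Cdrel^2)$ as $\delta\to\infty$ --- and keep this $\delta$ fixed; only then would I fix $\kappa_{\Theta'}\in(0,1)$ small enough that $(1-(1+\delta)\kappa_{\Theta'})/(1+(1+\delta^{-1})\Cstab^2\Cdrel^2)\ge\Theta'$ (in particular $(1+\delta)\kappa_{\Theta'}<1$), which is possible because for the now-fixed $\delta$ the left-hand side converges to the previous quantity as $\kappa\to0$. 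Since both $\delta$ and $\kappa_{\Theta'}$ depend only on $\Cstab$, $\Cdrel$, and $\Theta'$, so does $\kappa_{\Theta'}$, and the first alternative in \eqref{eq:optimal-doerfler} is established; the dual alternative is the same computation with $\zeta$ and $z$.

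I do not expect a real obstacle here: this is precisely the classical D\"orfler-optimality argument, and all the tools it needs --- stability, discrete reliability, and a single Young inequality --- are already at hand. The only point that must be handled with care is the quantifier order in the final step: one must resist letting $\delta\to0$ (which would make the $(1+\delta^{-1})$ factor diverge); the correct order is ``$\delta$ large and fixed first, then $\kappa$ small''. As a minor bookkeeping check, \eqref{assumption:stab} must be applied with $\UU=\TT_\coarse\cap\TT_\fine$, which is admissible because $\TT_\coarse\cap\TT_\fine\subseteq\TT_\fine\cap\TT_\coarse$, and one should note that the whole reasoning is symmetric in $\eta$ and $\zeta$, so the dual alternative costs nothing extra.
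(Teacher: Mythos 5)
Your proof is correct. The paper itself gives no proof for this lemma but simply cites \cite[Proposition~4.12]{axioms}, and your argument is precisely the classical D\"orfler-optimality argument that the cited reference uses (split into kept and refined elements, stability on the kept elements, Young's inequality, discrete reliability on the energy difference, and then the two-step ``$\delta$ large first, then $\kappa$ small'' choice of parameters); the reduction to a single estimator ($\eta$ or $\zeta$ separately) is exactly how the scalar result of the axioms paper is applied to both the primal and dual side here. One cosmetic remark: stability~\eqref{assumption:stab} as stated bundles the $\eta$- and $\zeta$-terms into one inequality for a common pair $(v_\fine,v_\coarse)$, so strictly speaking you invoke it twice --- once with $(u_\fine^\star,u_\coarse^\star)$ to extract the $\eta$-bound and once with $(z_\fine^\star,z_\coarse^\star)$ for the $\zeta$-bound --- which is exactly what your ``verbatim for the dual'' remark already does, so nothing is missing.
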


The next lemma is already implicitly found in~\cite{banach}.
It shows that, if $\lctr > 0$ is sufficiently small, then D\"orfler marking for the exact discrete solution implicitly implies D\"orfler marking for the approximate discrete solution.
This will turn out to be the key observation to prove optimal convergence rates.
We include the proof for the convenience of the reader.

\begin{lemma}\label{lemma:banach}
	Suppose \eqref{assumption:stab}--\eqref{assumption:rel}.
	Let $0< \Theta \le 1$ and $0 < \lctr < \lconv:=(1-\qctr) / ( \qctr \Cstab)$.
	Define $\Theta' := \big(\frac{\sqrt{\Theta} + \lctr/\lconv}{1-\lctr/\lconv}\big)^2$.
	Then, as soon as the iterative solver terminates~\eqref{eq:stopping}, there hold the following statements~{\rm(i)--(iv)} for all $0 \leq \ell < \underline{\ell}$ and all $\UU_\ell\subseteq \TT_\ell$:
	\begin{itemize}
		\item[\rm(i)]
		$(1 - \lctr/\lconv) \, \eta_\ell(u_{\ell}^{\m}) \le \eta_\ell(u_\ell^\star)
		\le (1 + \lctr/\lconv) \, \eta_\ell(u_{\ell}^{\m})$.
		
		\item[\rm(ii)]
		$\Theta \, \eta_\ell(u_\ell^\m)^2
		\le \eta_\ell(\UU_\ell, u_\ell^\m)^2$ \quad
		provided that
		$\Theta' \, \eta_\ell(u_{\ell}^\star)^2
		\le \eta_\ell(\UU_\ell, u_{\ell}^\star)^2$.
		
		\item[\rm(iii)]
		$(1 - \lctr/\lconv) \, \zeta_\ell(z_{\ell}^{\n})
		\le \zeta_\ell(z_\ell^\star) \le (1 + \lctr/\lconv) \, \zeta_\ell(z_{\ell}^{\n})$.
		
		\item[\rm(iv)]
		$\Theta \, \zeta_\ell(z_\ell^\n)
		\le \zeta_\ell(\UU_\ell, z_\ell^\n)$ \quad
		provided that
		$\Theta' \, \zeta_\ell(z_{\ell}^\star)^2
		\le \zeta_\ell(\UU_\ell, z_{\ell}^\star)^2$.
	\end{itemize}
\end{lemma}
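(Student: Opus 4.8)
The plan is to reduce all four statements to a single quantitative bound expressing that the accepted iterate is close to the exact discrete solution \emph{relative to the estimator}. Chaining the solver contraction~\eqref{eq:solver}, the lower estimate in~\eqref{eq:contraction}, the stopping criterion~\eqref{eq:stopping}, and the definition $\lconv = (1-\qctr)/(\qctr\Cstab)$ (so that $1/(\Cstab\lconv) = \qctr/(1-\qctr)$), I would first derive
\begin{equation*}
	\Cstab\,\enorm{u_\ell^\star - u_\ell^{\m}}
	\le \frac{\Cstab\,\qctr}{1-\qctr}\,\enorm{u_\ell^{\m} - u_\ell^{\m-1}}
	\le \frac{\lctr}{\lconv}\,\eta_\ell(u_\ell^{\m}),
\end{equation*}
and, since the dual solver obeys the same contraction and stopping criterion, the analogous bound $\Cstab\,\enorm{z_\ell^\star - z_\ell^{\n}} \le (\lctr/\lconv)\,\zeta_\ell(z_\ell^{\n})$.

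For~(i), I would apply stability~\eqref{assumption:stab} on the whole triangulation (taking $\TT_\fine = \TT_\coarse = \TT_\ell$, so that $u_\ell^\star, u_\ell^{\m} \in \XX_\ell$ are admissible and $\UU_\coarse = \TT_\ell$), which gives $|\eta_\ell(u_\ell^\star) - \eta_\ell(u_\ell^{\m})| \le \Cstab\,\enorm{u_\ell^\star - u_\ell^{\m}} \le (\lctr/\lconv)\,\eta_\ell(u_\ell^{\m})$; since $\lctr < \lconv$, this rearranges at once into the two-sided estimate of~(i). Statement~(iii) is the verbatim dual argument with $\zeta_\ell$, $z_\ell^\star$, $z_\ell^{\n}$.

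For~(ii), assume $\Theta'\,\eta_\ell(u_\ell^\star)^2 \le \eta_\ell(\UU_\ell, u_\ell^\star)^2$. Localizing~\eqref{assumption:stab} to $\UU_\ell \subseteq \TT_\ell$ gives
\begin{equation*}
	\eta_\ell(\UU_\ell, u_\ell^{\m})
	\ge \eta_\ell(\UU_\ell, u_\ell^\star) - \Cstab\,\enorm{u_\ell^\star - u_\ell^{\m}}
	\ge \sqrt{\Theta'}\,\eta_\ell(u_\ell^\star) - \frac{\lctr}{\lconv}\,\eta_\ell(u_\ell^{\m}),
\end{equation*}
into which I would insert the lower bound $\eta_\ell(u_\ell^\star) \ge (1 - \lctr/\lconv)\,\eta_\ell(u_\ell^{\m})$ from~(i). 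The definition $\Theta' = \big((\sqrt{\Theta} + \lctr/\lconv)/(1-\lctr/\lconv)\big)^2$ is precisely tuned so that $\sqrt{\Theta'}\,(1-\lctr/\lconv) - \lctr/\lconv = \sqrt{\Theta}$, whence $\eta_\ell(\UU_\ell, u_\ell^{\m}) \ge \sqrt{\Theta}\,\eta_\ell(u_\ell^{\m})$, i.e.,~(ii) after squaring; the proof of~(iv) for $\zeta_\ell$, $z_\ell^\star$, $z_\ell^{\n}$ is identical (and the form stated there is even slightly weaker, since $\Theta \le \sqrt{\Theta} \le 1$).

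I do not expect a genuine obstacle here: the argument uses only stability~\eqref{assumption:stab} together with the solver properties, and is essentially bookkeeping with constants. The only delicate point is tracking them so that the algebra in~(ii) collapses exactly — verifying $1/(\Cstab\lconv) = \qctr/(1-\qctr)$ and noting that the hypothesis $0 < \lctr < \lconv$ is exactly what keeps $1 - \lctr/\lconv$ positive, hence keeps the divisions in the definition of $\Theta'$ and in~(i)--(iv) meaningful.
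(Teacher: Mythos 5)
Your proposal is correct and follows essentially the same route as the paper: chain the solver contraction, the lower estimate in~\eqref{eq:contraction}, and the stopping criterion~\eqref{eq:stopping} to obtain $\Cstab\,\enorm{u_\ell^\star - u_\ell^{\m}} \le (\lctr/\lconv)\,\eta_\ell(u_\ell^{\m})$, then feed this into stability~\eqref{assumption:stab} on $\TT_\ell$ for~(i) and on $\UU_\ell$ for~(ii), with the definition of $\Theta'$ making the constants collapse exactly to $\sqrt{\Theta}$. The paper merely organizes the bookkeeping slightly differently (deriving the two-sided estimate for general $\UU_\ell$ and then specializing), and your aside that the stated form of~(iv) is weaker than a squared version is a fair observation, since the same argument in fact yields $\Theta\,\zeta_\ell(z_\ell^{\n})^2 \le \zeta_\ell(\UU_\ell, z_\ell^{\n})^2$.
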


\begin{proof}
	It holds that
	\begin{align*}
	&\eta_\ell(\UU_\ell, u_\ell^\star)
	\reff{assumption:stab}\le \eta_\ell(\UU_\ell, u_{\ell}^{\m}) + \Cstab \, \enorm{u_\ell^\star - u_{\ell}^{\m}}
	\reff{eq:contraction}\le \eta_\ell(\UU_\ell, u_{\ell}^{\m}) + \Cstab \, \frac{\qctr}{1-\qctr} \, \enorm{u_{\ell}^{\m} - u_{\ell}^{ \m-1}}
	\\& \quad
	\reff{eq:stopping}\le \eta_\ell(\UU_\ell, u_{\ell}^{\m}) + \Cstab \, \frac{\qctr}{1-\qctr} \, \lctr \, \eta_\ell(u_{\ell}^{\m})
	\refp{eq:contraction}= \eta_\ell(\UU_\ell, u_{\ell}^{\m}) + \frac{\lctr}{\lconv} \, \eta_\ell(u_{\ell}^{\m}).
	\end{align*}
	The same argument proves that
	\begin{align*}
	\eta_\ell(\UU_\ell, u_{\ell}^{\m}) \le \eta_\ell(\UU_\ell, u_\ell^\star) + \frac{\lctr}{\lconv} \, \eta_\ell(u_{\ell}^{\m}).
	\end{align*}
	For $\UU_\ell = \TT_\ell$, the latter two estimates lead to
	\begin{align*}
	(1 - \lctr/\lconv) \, \eta_\ell(u_{\ell}^{\m}) \le \eta_\ell(u_\ell^\star) \le (1 + \lctr/\lconv) \, \eta_\ell(u_{\ell}^{\m}).
	\end{align*}
	This concludes the proof of~(i). To see~(ii), we use the assumption
	\begin{align*}
	(1-\lctr/\lconv) \, \sqrt{\Theta'} \, \eta_\ell(u_\ell^{\m})
	\stackrel{\rm(i)}\le \sqrt{\Theta'} \, \eta_\ell(u_{\ell}^\star)
	\le \eta_\ell(\UU_\ell, u_{\ell}^\star)
	\le \eta_\ell(\UU_\ell, u_\ell^\m) + \frac{\lctr}{\lconv} \, \eta_\ell(u_{\ell}^{\m}).
	\end{align*}
	Noting that $\sqrt{\Theta} = (1-\lctr/\lconv)\,\sqrt{\Theta'} - \lctr/\lconv$,
	this concludes the proof of~(ii). The remaining claims~(iii)--(iv) follow verbatim. 
\end{proof}

\begin{proof}[Proof of Theorem~\ref{theorem:optimal-convergence}]
    By Corollary~\ref{corollary:optimal-convergence}, it is sufficient to prove that
    \begin{align*}
        C_{s+t}
        =
        \sup_{(\ell,k) \in \QQ} \big( \#\TT_\ell - \#\TT_0 + 1 \big)^{s+t} \Lambda_\ell^k
        \lesssim
        \max\{\norm{u^\star}{\mathbb{A}_s}\norm{z^\star}{\mathbb{A}_t},\Lambda_0^0\}.
    \end{align*}
    We prove this inequality in two steps.
    
    \textbf{Step 1:} In this step, we bound the number of marked elements $\#\MM_{\ell'}$ for arbitrary $0\le \ell'<\underline{\ell}$.
    Let $\Theta > 0$ and corresponding $\Theta'$ from Lemma~\ref{lemma:banach} such that
    \begin{equation}
    \label{eq:generic-theta}
	    \Theta' = \Big( \frac{\sqrt{\Theta}+\lctr/\lconv}{1-\lctr/\lconv} \Big)^2
	    < \frac{1}{1+\Cstab^2\Cdrel^2}.
    \end{equation}
	Let $\TT_{\fine(\ell')}\in\T(\TT_{\ell'})$ be the corresponding mesh as in Lemma~\ref{lemma:comparison}. 
	With Lemma~\ref{lemma:optimality-doerfler}, this yields that 
	\begin{align*}
		\Theta'\eta_{\ell'}(u_{\ell'}^\star)^2 \le \eta_{\ell'}(\TT_{\ell'}\setminus\TT_{\fine(\ell')},u_{\ell'}^\star)^2
		\quad\text{or}\quad
		\Theta'\zeta_{\ell'}(z_{\ell'}^\star)^2 \le \zeta_{\ell'}(\TT_{\ell'}\setminus\TT_{\fine(\ell')},z_{\ell'}^\star)^2. 
	\end{align*}
	Lemma~\ref{lemma:banach} with $\UU_{\ell'} = \TT_{\ell'}\setminus\TT_{\fine(\ell')}$ shows that
	\begin{equation}
	\label{eq:generic-doerfler}
		\Theta\eta_{\ell'}(u_{\ell'}^\m)^2 \le \eta_{\ell'}(\TT_{\ell'}\setminus\TT_{\fine(\ell')},u_{\ell'}^\star)^2
		\quad\text{or}\quad
		\Theta\zeta_{\ell'}(z_{\ell'}^\n)^2 \le \zeta_{\ell'}(\TT_{\ell'}\setminus\TT_{\fine(\ell')},z_{\ell'}^\star)^2. 
	\end{equation}
	We consider the marking strategies from Remark~\ref{rem:marking} separately.

	For strategy {\rm (a)}, we have with $\Theta := 2\theta$ and assumption~\eqref{eq:small-theta} that~\eqref{eq:generic-theta} is satisfied.
	Hence, \eqref{eq:generic-doerfler} implies that there holds~\eqref{eq:abstract-marking}, i.e., 
	\begin{align*}
		2 \theta \eta_{\ell'}(u_{\ell'}^\m)^2 \zeta_{\ell'}(z_{\ell'}^\n)^2 
		\le \eta_{\ell'}(\TT_{\ell'}\setminus\TT_{\fine(\ell')},u_{\ell'}^\m)^2 \zeta_{\ell'}(z_{\ell'}^\n)^2 
		+ \eta_{\ell'}(u_{\ell'}^\m)^2 \zeta_{\ell'}(\TT_{\ell'}\setminus\TT_{\fine(\ell')},z_{\ell'}^\n)^2.
	\end{align*}
	By assumption of Theorem~\ref{theorem:optimal-convergence}, $\MM_{\ell'}$ is essentially minimal with~\eqref{eq:abstract-marking}. 
	We infer that
	\begin{equation}
	\label{eq:M-estimate}
		\#\MM_{\ell'} 
		\leq
		\Cmark \#(\TT_{\ell'}\setminus\TT_{\fine(\ell')}) 
		\stackrel{\eqref{eq:mesh-sons}}{\lesssim}
		\#\TT_{\fine(\ell')} - \#\TT_{\ell'}. 
	\end{equation}
	For the strategies {\rm (b)--(c)}, we set $\Theta = \theta$ and note that assumption~\eqref{eq:small-theta} (as well as the weaker assumption~\eqref{eq:larger-theta}) imply~\eqref{eq:generic-theta}, and hence~\eqref{eq:generic-doerfler}.
	Again, by assumption of Theorem~\ref{theorem:optimal-convergence}, $\MM_\ell$ is chosen essentially minimal (with an additional factor two for the strategy {\rm (c)}) such that~\eqref{eq:generic-doerfler} holds.
	For all three strategies, we therefore conclude that
    \begin{align*}
        \#\MM_{\ell'} 
        \lesssim \#\TT_{\fine(\ell')} - \#\TT_{\ell'}
        &\stackrel{\mathclap{\eqref{eq2:comparison}}}\lesssim ~~~
        \big(\norm{u^\star}{\mathbb{A}_s}\norm{z^\star}{\mathbb{A}_t}\big)^{1/(s+t)} \big(\eta_{\ell'}(u_{\ell'}^\star)\zeta_{\ell'}(z_{\ell'}^\star)\big)^{-1/(s+t)}
        \\
        &\stackrel{\mathclap{\text{Lem.\ref{lemma:banach}}}}\lesssim ~~~
        \big(\norm{u^\star}{\mathbb{A}_s}\norm{z^\star}{\mathbb{A}_t}\big)^{1/(s+t)} \big(\eta_{\ell'}(u_{\ell'}^\m)\zeta_{\ell'}(z_{\ell'}^\n)\big)^{-1/(s+t)}.
    \end{align*}
    Recall that \eqref{eq:contraction} and \eqref{eq:stopping} give that $\eta_{\ell'}(u_{\ell'}^\k)\zeta_{\ell'}(z_{\ell'}^\k)\simeq\Lambda_{\ell'}^\k$. 
    This finally shows that
    \begin{align*}
        \#\MM_{\ell'} \lesssim \big(\norm{u^\star}{\mathbb{A}_s}\norm{z^\star}{\mathbb{A}_t}\big)^{1/(s+t)} (\Lambda_{\ell'}^\k)^{-1/(s+t)}. 
    \end{align*}
    
    \textbf{Step 2:}
    Let $(\ell,k)\in\QQ$. First, we consider $\ell>0$ and thus $\#\TT_\ell>\#\TT_0$.
    The closure estimate and Step~1 prove that 
    \begin{align*}
        \#\TT_\ell - \#\TT_0 + 1 
        \simeq \#\TT_\ell -\#\TT_0 
        \stackrel{\eqref{eq:mesh-closure}}\lesssim \sum_{\ell'=0}^{\ell-1} \#\MM_{\ell'}
        \lesssim \big(\norm{u^\star}{\mathbb{A}_s}\norm{z^\star}{\mathbb{A}_t}\big)^{1/(s+t)} 
        \sum_{\ell'=0}^{\ell-1}(\Lambda_{\ell'}^\k)^{-1/(s+t)}
        \\
        \le \big(\norm{u^\star}{\mathbb{A}_s}\norm{z^\star}{\mathbb{A}_t}\big)^{1/(s+t)} 
        \sum_{\substack{(\ell',k')\in\QQ\\|(\ell',k')|\ge|(\ell,k)|}}(\Lambda_{\ell'}^{k'})^{-1/(s+t)}.
    \end{align*}
    Linear convergence of Theorem~\ref{theorem:linear-convergence}, further shows that 
    \begin{align*}
        \#\TT_\ell-\#\TT_0+1 
        &\lesssim \big(\norm{u^\star}{\mathbb{A}_s}\norm{z^\star}{\mathbb{A}_t}\big)^{1/(s+t)} \Clin^{1/(s+t)}  (\Lambda_{\ell}^k)^{-1/(s+t)}
        \sum_{\substack{(\ell',k')\in\QQ\\|(\ell',k')|\ge|(\ell,k)|}} (\qlin^{1/(s+t)})^{|(\ell,k)|-|(\ell',k')|}
        \\
        &\le \big(\norm{u^\star}{\mathbb{A}_s}\norm{z^\star}{\mathbb{A}_t}\big)^{1/(s+t)} \frac{\Clin^{1/(s+t)}}{1-\qlin^{1/(s+t)}}\, \Clin^{1/(s+t)}  (\Lambda_{\ell}^k)^{-1/(s+t)}.
    \end{align*}
    Rearranging this estimate, we see that
    \begin{align*}
        (\#\TT_\ell-\#\TT_0+1)^{s+t}\Lambda_\ell^k \lesssim \norm{u^\star}{\mathbb{A}_s}\norm{z^\star}{\mathbb{A}_t}
        \quad\text{for all }(\ell,k)\in\QQ\text{ with }\ell>0.
    \end{align*}
    It remains to consider $\ell=0$.
    By Theorem~\ref{theorem:linear-convergence}, we have that 
    \begin{align*}
        (\#\TT_\ell - \#\TT_0 + 1)^{s+t} \Lambda_\ell^k = \Lambda_0^k \lesssim \Lambda_0^0
        \quad\text{for all }(\ell,k)\in\QQ\text{ with }\ell=0.
    \end{align*}
    This concludes the proof.
\end{proof}




\bibliographystyle{alpha}
\bibliography{literature}

\end{document}